\documentclass[runningheads]{llncs}
\usepackage[T1]{fontenc}
\RequirePackage{silence}
\usepackage{amsmath}

\usepackage{amsfonts}
\usepackage{amssymb}
\usepackage{bm}
\usepackage{algorithm}
\usepackage{algpseudocode}
\usepackage{float}
\usepackage{graphicx}

\DeclareMathOperator*{\argmin}{arg\,min}

\newtheorem{assumption}{Assumption}

\newcommand{\pinv}[1]{#1^+} % \pinv{A} -> A^+

\usepackage{color}
\usepackage{hyperref}

\begin{document}
\title{A Parameter-Free Zeroth-Order Method with Covariance Matrix Adaptation and Effective Dimension}

\titlerunning{Parameter-Free ZO Method with CMA}
%
%\titlerunning{Abbreviated paper title}
% If the paper title is too long for the running head, you can set
% an abbreviated paper title here
%
\author{Alexander Sholokhov\inst{1} 
\and
Alexander Rogozin\inst{2}}

\authorrunning{A.Sholokhov and A.Rogozin}
% First names are abbreviated in the running head.
% If there are more than two authors, 'et al.' is used.

\institute{Moscow Institute of Physics and Technology, Russia \\
\email{sholokhov.am@phystech.edu} 
\and
Moscow Institute of Physics and Technology, Russia \\ \email{aleksandr.rogozin@phystech.edu}}

\maketitle              % typeset the header of the contribution
\begin{abstract}

Zeroth-order optimization methods are essential for solving black-box problems where gradient information is unavailable or expensive to compute. This paper presents POEM-CMA, a novel parameter-free stochastic zeroth-order algorithm that extends the recent POEM method by integrating covariance matrix alignment and the notion of effective dimension.

In contrast to traditional zeroth-order approaches that rely on isotropic random directions, POEM-CMA performs anisotropic sampling by constructing a covariance matrix from gradient estimates. This enables the algorithm to focus sampling efforts on the most informative directions. We introduce the use of the empirical effective dimension $d^* = \frac{\operatorname{tr}(\hat{\Sigma})}{\lambda_{\max}(\hat{\Sigma})}$, which reflects the intrinsic dimensionality of the problem and replaces the ambient dimension  in both sampling and complexity analysis.

We prove that POEM-CMA achieves a near-optimal convergence rate, requiring only $\tilde{\mathcal{O}}\left(\frac{d^* \kappa(\hat{\Sigma}) L^2 D_{\mathcal{X}}^2}{\varepsilon^2}\right)$ stochastic zeroth-order oracle queries. The method remains fully parameter-free and demonstrates significant improvements over the original POEM in problems with low-rank structure where $d^* \ll d$. Numerical experiments on hinge-loss binary classification tasks using LibSVM datasets confirm the practical superiority of the proposed approach.

\keywords{ Zeroth-order optimization \and parameter-free methods \and covariance matrix adaptation \and effective dimension \and anisotropic sampling \and stochastic convex optimization}
\end{abstract}
\section{Introduction}

This paper studies the stochastic optimization problem 
\begin{align}\label{main:prob}
    \min_{x\in X} f(x) \triangleq \mathbb{E}_{\xi\sim \Xi}[F(x; \xi)]
\end{align}
where the domain $X\subseteq \mathbb{R}^d$ is a compact convex set, the random variable $\xi$ follows the distribution $\Xi$, and the stochastic component function $F(x;\xi)$ is convex and Lipschitz continuous in $x$ on $X$ for any given $\xi$.

We focus on stochastic zeroth-order (ZO) optimization for solving Problem~(\ref{main:prob}), which only allows the algorithm to query stochastic function values at given points. This setting is highly relevant in many applications where accessing (stochastic) first-order information is expensive or infeasible, such as bandit optimization, adversarial training, reinforcement learning, and black-box models.

The finite difference methods are widely used in zeroth-order optimization. They estimate the first-order information of the objective by random directional perturbations. For stochastic convex problems with Lipschitz continuous components, ~\cite{nesterov2017random} developed random search methods with sublinear convergence rates. Later, ~\cite{duchi2015optimal} proposed a two-point stochastic finite difference method that achieves a sharper dependence on the dimension. ~\cite{shamir2017optimal} further improved practicality by using a single random sequence, while ~\cite{rando2024optimal} studied orthogonal random directions. However, most existing ZO methods suffer from two major limitations: (i) their performance heavily depends on carefully tuned hyperparameters (step size and smoothing radius), and (ii) their convergence rates explicitly depend on the ambient dimension $d$, which is often overly pessimistic in real-world problems with low effective dimensionality.

Recently, Ren and Luo (2025) proposed POEM, a parameter-free stochastic zeroth-order algorithm that achieves near-optimal convergence rates without requiring knowledge of problem parameters such as the Lipschitz constant or total iteration budget. Nevertheless, POEM still relies on isotropic random sampling, which does not exploit the underlying geometric structure of the objective.

In this paper, we introduce \textbf{POEM-CMA}, a natural yet powerful extension of POEM that incorporates \emph{covariance matrix adaptation} and the notion of \emph{effective dimension}. Instead of sampling directions uniformly from the unit sphere, our method dynamically builds a covariance matrix from past gradient estimates and performs anisotropic sampling - concentrating queries along the most informative directions. We define the empirical effective dimension as
\[
% d^* = \frac{\operatorname{tr}(\Sigma)}{\lambda_{\max}(\Sigma)},
d^* = \frac{\operatorname{tr}(\hat{\Sigma})}{\lambda_{\max}(\hat{\Sigma})}
\]
which reflects the intrinsic number of significant directions and is typically much smaller than $d$ in structured problems.

We prove that POEM-CMA achieves a near-optimal convergence rate depending on both the effective dimension $d^*$ and the active subspace condition number $\kappa(\hat{\Sigma}) \triangleq \lambda_{\max}(\hat{\Sigma})/\lambda_{\min}(\hat{\Sigma})$. When $d^* \kappa(\hat{\Sigma}) \ll d$, this yields a substantial improvement in both theory and practice.

The remainder of the paper is organized as follows. Section 2 presents the problem setting. Section 3 introduces the POEM-CMA algorithm. Theoretical analysis is provided in Sections 4 and 5. Experimental results are reported in Section 6, and Section 7 concludes the paper.

\begin{table*}[t]
\centering
\caption{Comparison of stochastic zeroth-order (SZO) complexity for finding an $\varepsilon$-suboptimal solution of Problem~(\ref{main:prob}). Here $T$ is the iteration budget, $\bar{r}_t$ and $G_t$ are defined in Algorithm~\ref{alg:poem-cma}.}
\label{table:szo}
\vskip0.05cm 
\begin{tabular}{lcccc}
\hline
Algorithm & Parameter-Free & SZO Complexity & Step size $\eta_t$ & Smoothing $\mu_t$ \\
\hline\hline
RSNSO \cite{nesterov2017random} & No & $\mathcal{O}\left(\frac{d^2 L^2 s_0^2}{\varepsilon^2}\right)$ & $\frac{s_0}{d L \sqrt{T}}$ & $s_0 \sqrt{\frac{d}{T}}$ \\
TPGE ~\cite{duchi2015optimal} & No & $\tilde{\mathcal{O}}\left(\frac{d L^2 D_X^2}{\varepsilon^2}\right)$ & $\frac{D_X}{L \sqrt{d \log(2d) t}}$ & $\frac{D_X}{t}$ or $\frac{D_X}{d^2 t^2}$ \\
TPBCO ~\cite{shamir2017optimal} & No & $\mathcal{O}\left(\frac{d L^2 D_X^2}{\varepsilon^2}\right)$ & $\frac{D_X}{L \sqrt{d T}}$ & $D_X \sqrt{\frac{d}{T}}$ \\
POEM ~\cite{ren2025poem} & Yes & $\tilde{\mathcal{O}}\left(\frac{d L^2 D_X^2}{\varepsilon^2}\right)$ & $\frac{\bar{r}_t}{\sqrt{G_t}}$ & $\bar{r}_t \sqrt{\frac{d}{t+1}}$ \\
\textbf{POEM-CMA} & Yes & $\tilde{\mathcal{O}}\left( \frac{d^* \kappa(\hat{\Sigma}) L^2 D_{\mathcal{X}}^2}{\varepsilon^2} \right)$ & $\frac{\bar{r}_t}{\sqrt{G_t}}$ & $\bar{r}_t \sqrt{\frac{d^*}{t+1}}$ \\
\hline
Lower Bound \cite{duchi2015optimal} & — & $\Omega\left(\frac{d L^2 D_X^2}{\varepsilon^2}\right)$ & — & — \\
\hline
\noalign{\smallskip}
\multicolumn{5}{p{0.95\textwidth}}{\footnotesize \textbf{Note on Lower Bounds:} The lower bound $\Omega(d L^2 D_X^2 / \varepsilon^2)$ established in \cite{duchi2015optimal} holds for worst-case, fully dense high-dimensional landscapes where the objective function varies significantly along all $d$ ambient dimensions. Our upper bound $\tilde{\mathcal{O}}(d^* \kappa(\hat{\Sigma}) L^2 D_{\mathcal{X}}^2 / \varepsilon^2)$ does not violate this fundamental limit; rather, it refines the complexity for the structured class of objectives possessing an intrinsic low-rank geometry ($d^* \ll d$). Within the active low-rank subspace $\mathcal{S}$, our rate aligns with the subspace-restricted lower bound $\Omega(d^*)$, providing the theoretical acceleration.} \\
\end{tabular}
\end{table*}

\section{Preliminaries}\label{sec:pre}

In this section, we introduce the notation, basic definitions, and assumptions used throughout the paper. We also recall the randomized smoothing technique in zeroth-order optimization.

\subsection{Notation}
\label{sec:notation}

We use $\mathbb{R}^d$ to denote the $d$-dimensional ambient Euclidean space and $\|\cdot\|$ to denote the standard Euclidean norm. Let $\mathbb{B}^d \triangleq \{u \in \mathbb{R}^d : \|u\| \leqslant 1\}$ and $\mathbb{S}^{d-1} \triangleq \{v \in \mathbb{R}^d : \|v\| = 1\}$ be the unit ball and unit sphere, respectively. We denote by $\mathcal{U}(\mathbb{B}^d)$ and $\mathcal{U}(\mathbb{S}^{d-1})$ the uniform distributions over the unit ball and unit sphere. For a positive semi-definite matrix $\Sigma \in \mathbb{R}^{d \times d}$, we define its effective dimension as $d^*(\Sigma) \triangleq \operatorname{tr}(\Sigma)/\lambda_{\max}(\Sigma)$. 

Following the classical zeroth-order literature \cite{nesterov2017random}, we introduce $s_0 \geqslant 1$ as the theoretical distance scaling parameter that bounds the initial distance to the optimal solution set, satisfying $\|x_0 - x_\star\| \leqslant s_0$. 

We use $\tilde{\mathcal{O}}(\cdot)$ to hide polylogarithmic factors. For a positive semi-definite matrix $\Sigma \in \mathbb{R}^{d \times d}$, we define its \emph{effective dimension} as
\[
d^*(\Sigma) := \frac{\operatorname{tr}(\Sigma)}{\lambda_{\max}(\Sigma)}.
\]

We define the maximum traveled distance up to iteration $t$ as
\[
\bar{r}_t := \max_{0 \leqslant k \leqslant t} \|x_k - x_0\|,
\]
which is a non-decreasing sequence. We also define the accumulated sum of squared gradient norms as
\[
G_t := \sum_{k=0}^t \|g_k\|^2.
\]

\subsection{Problem Setting and Assumptions}
\label{sec:assumptions}

We consider the stochastic convex optimization problem
\begin{align}\label{main:prob2}
    \min_{x \in X} f(x) \triangleq \mathbb{E}_{\xi \sim \Xi} [F(x; \xi)],
\end{align}
where $X \subseteq \mathbb{R}^d$ is a compact convex set.

\begin{assumption}[Compact Convex Domain]\label{asm:domain}
The domain $X$ is compact and convex with finite diameter
\[
D_X \triangleq \max_{x,y \in X} \|x - y\| < \infty.
\]
\end{assumption}

\begin{definition}[Optimal Solution]\label{def:x_star}
Let $x_\star \in X$ be an optimal solution of Problem~\eqref{main:prob2}, i.e.,
\[
f(x_\star) = \min_{x \in X} f(x).
\]
\end{definition}

\begin{definition}[Euclidean Projection]
The Euclidean projection onto $X$ is defined as
\[
\Pi_{X}(x) \triangleq \argmin_{y \in X} \|x - y\|.
\]
\end{definition}

\begin{definition}[$\varepsilon$-Suboptimal Solution]\label{def:sol}
A point $\hat{x} \in X$ is called an $\varepsilon$-suboptimal solution if
\[
f(\hat{x}) - f(x_\star) \leqslant \varepsilon.
\]
\end{definition}

\begin{assumption}[Convexity]\label{asm:convex}
For each fixed $\xi$, the stochastic component $F(x; \xi)$ is convex in $x$.
\end{assumption}

\begin{assumption}[Lipschitz Continuity]\label{asm:lipschitz}
There exists a constant $L \geq 0$ such that for almost every $\xi$,
\[
\|F(x;\xi) - F(y;\xi)\| \leqslant L \|x - y\|, \quad \forall x, y \in \mathbb{R}^d.
\]
\end{assumption}

\begin{assumption}[Stochastic Zeroth-Order Oracle]\label{asm:oracle}
The algorithm has access to a stochastic zeroth-order oracle that can return unbiased noisy function evaluations $F(x;\xi)$ and $F(y;\xi)$ for any queried points $x, y$, i.e.,
\[
\mathbb{E}_{\xi}[F(x;\xi)] = f(x), \quad \mathbb{E}_{\xi}[F(y;\xi)] = f(y).
\]
\end{assumption}

\subsection{Randomized Smoothing}

Randomized smoothing is a standard technique in zeroth-order optimization. For a smoothing parameter $\mu > 0$, we define the smoothed surrogate of $f$ as
\[
f_\mu(x) \triangleq \mathbb{E}_{u \sim \mathbb{U}(\mathbb{B}^d)} [f(x + \mu u)].
\]

\begin{lemma}[{Shamir~\cite[Lemma 8]{shamir2017optimal}}]\label{lem:error}
Under Assumptions \ref{asm:convex} and \ref{asm:lipschitz}, the smoothed function $f_\mu(x)$ is convex and satisfies
\[
|f_\mu(x) - f(x)| \leqslant L\mu, \quad \forall x \in \mathbb{R}^d.
\]
\end{lemma}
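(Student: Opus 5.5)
We prove the two claims of Lemma~\ref{lem:error} separately, working directly from the definition $f_\mu(x) = \mathbb{E}_{u \sim \mathcal{U}(\mathbb{B}^d)}[f(x+\mu u)]$. First we record that $f$ inherits convexity and Lipschitz continuity from $F$. By Assumption~\ref{asm:convex}, each $F(\cdot;\xi)$ is convex, so taking expectations in $F(\theta x + (1-\theta) y;\xi) \leqslant \theta F(x;\xi) + (1-\theta)F(y;\xi)$ gives convexity of $f$. For the Lipschitz property we use Jensen's inequality and Assumption~\ref{asm:lipschitz}:
\[
|f(x)-f(y)| \leqslant \mathbb{E}_\xi|F(x;\xi)-F(y;\xi)| \leqslant L\|x-y\|.
\]
Here we implicitly assume that $F(x;\xi)$ is integrable, so that $f$ is finite on all of $\mathbb{R}^d$. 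This is needed because the smoothing queries points $x+\mu u$ that may lie outside $X$.

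\textbf{Convexity.} For fixed $u$, the map $x \mapsto f(x+\mu u)$ is a composition of the convex function $f$ with an affine shift, hence convex. Therefore, for $\theta\in[0,1]$,
\[
f(\theta x+(1-\theta)y+\mu u) \leqslant \theta f(x+\mu u)+(1-\theta) f(y+\mu u).
\]
Integrating over $u \sim \mathcal{U}(\mathbb{B}^d)$ gives $f_\mu(\theta x+(1-\theta)y)\leqslant \theta f_\mu(x)+(1-\theta)f_\mu(y)$. Integrability of $f(x+\mu u)$ in $u$ is automatic, since $f$ is continuous (indeed Lipschitz) and the ball is compact.

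\textbf{Approximation bound.} We write the difference as a single expectation and move the absolute value inside:
\[
|f_\mu(x)-f(x)| = \bigl|\mathbb{E}_u[f(x+\mu u)-f(x)]\bigr| \leqslant \mathbb{E}_u\,|f(x+\mu u)-f(x)| \leqslant L\mu\,\mathbb{E}_u\|u\| \leqslant L\mu,
\]
where the last step uses $\|u\|\leqslant 1$ on $\mathbb{B}^d$. One could sharpen the constant to $\tfrac{d}{d+1}L\mu$, since $\mathbb{E}\|u\| = d/(d+1)$, but this is not needed.

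There is no genuine obstacle. The only point needing care is the one flagged above: the expectations must be well defined, and the Lipschitz and convexity properties must hold on all of $\mathbb{R}^d$, not just on $X$. Assumption~\ref{asm:lipschitz} is stated on $\mathbb{R}^d$, which provides exactly this.
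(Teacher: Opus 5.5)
Your proof is correct and is the standard argument behind this lemma. The paper gives no proof of its own and simply cites Shamir's Lemma~8; your two steps are the usual ones for that result, namely that $f_\mu$ is an average of convex functions, and that $|f_\mu(x)-f(x)|\leqslant \mathbb{E}_u|f(x+\mu u)-f(x)|\leqslant L\mu\,\mathbb{E}\|u\|\leqslant L\mu$.

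One small precision. Convexity of $F(\cdot;\xi)$ on all of $\mathbb{R}^d$ comes from reading Assumption~\ref{asm:convex} as holding globally, since it is stated without a domain. It does not come from Assumption~\ref{asm:lipschitz}, which supplies only the global Lipschitz property.
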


\begin{lemma}[{Flaxman et al.~\cite[Lemma 3.4]{flaxman2004online}}]\label{lem:grad}
The gradient of the smoothed function admits the following representation:
\[
\nabla f_\mu(x) = \mathbb{E}_v \sim \mathbb{U}(\mathbb{S}^{d-1}) \left[ \frac{d}{2\mu} \bigl(f(x + \mu v) - f(x - \mu v)\bigr) v \right].
\]
\end{lemma}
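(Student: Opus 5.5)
The plan is to derive the representation from Stokes' (divergence) theorem applied to the ball of radius $\mu$. The identity is then symmetrized to produce the two-point form. First, assume temporarily that $f$ is differentiable almost everywhere; this holds by Rademacher's theorem, since $f$ is $L$-Lipschitz, being an average of $L$-Lipschitz components under Assumption~\ref{asm:lipschitz}. Under this assumption, differentiation under the integral is justified by dominated convergence, because difference quotients are bounded by $L$. This gives $\nabla f_\mu(x) = \mathbb{E}_{u\sim\mathcal{U}(\mathbb{B}^d)}[\nabla f(x+\mu u)]$. Rewriting the expectation as an integral over the ball, we get
\[
\nabla f_\mu(x) = \frac{1}{\operatorname{vol}(\mu\mathbb{B}^d)} \int_{\mu \mathbb{B}^d} \nabla f(x+w)\,dw .
\]

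Second, apply the divergence theorem componentwise, in the form $\int_{B}\partial_i g\,dw = \int_{\partial B} g\, n_i\, dS$, valid for Lipschitz $g$. The outward normal on the sphere of radius $\mu$ at $w=\mu v$ is $v$. This yields
\[
\int_{\mu\mathbb{B}^d}\nabla f(x+w)\,dw = \int_{\mu\mathbb{S}^{d-1}} f(x+w)\frac{w}{\|w\|}\,dS(w).
\]
Using $\operatorname{vol}(\mu\mathbb{B}^d)=\mu^d\operatorname{vol}(\mathbb{B}^d)$ and $\operatorname{area}(\mu\mathbb{S}^{d-1}) = \mu^{d-1}\operatorname{area}(\mathbb{S}^{d-1})$, together with the classical relation $\operatorname{area}(\mathbb{S}^{d-1}) = d\cdot\operatorname{vol}(\mathbb{B}^d)$, the ratio of normalizations is $d/\mu$. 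Hence we obtain the one-point formula $\nabla f_\mu(x) = \frac{d}{\mu}\mathbb{E}_{v\sim\mathcal{U}(\mathbb{S}^{d-1})}[f(x+\mu v)v]$. This is exactly Flaxman et al.'s Lemma 3.4.

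Third, symmetrize. The uniform distribution on the sphere is invariant under $v\mapsto -v$, so $\mathbb{E}[f(x+\mu v)v] = \mathbb{E}[f(x-\mu v)(-v)]$. Averaging the two expressions gives $\frac{d}{2\mu}\mathbb{E}[(f(x+\mu v)-f(x-\mu v))v]$, which is the claimed formula.

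The main obstacle is the regularity needed for the divergence theorem and for exchanging gradient and expectation, since $f$ is only Lipschitz. I would first try a mollification argument to handle this. Let $f^\delta = f*\phi_\delta$ be smooth; then $f^\delta\to f$ uniformly, with $|f^\delta-f|\le L\delta$. The identity holds for $f^\delta$ by the smooth argument above, and both sides converge as $\delta\to0$. The right side converges by uniform convergence of $f^\delta$ on the sphere. The left side converges because $\nabla f^\delta_\mu = \frac{d}{\mu}\mathbb{E}[f^\delta(x+\mu v)v]$ converges uniformly, while $f^\delta_\mu\to f_\mu$ uniformly. Uniform convergence of gradients of $C^1$ functions whose values also converge implies that the limit $f_\mu$ is $C^1$ with the limiting gradient.
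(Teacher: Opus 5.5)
Your proposal is correct and follows essentially the same route as the source the paper cites; the paper itself gives no argument beyond the reference. Flaxman et al.\ prove the one-point identity $\nabla f_\mu(x)=\frac{d}{\mu}\mathbb{E}_v[f(x+\mu v)v]$ via Stokes' theorem and the ratio $\operatorname{area}(\mathbb{S}^{d-1})/\operatorname{vol}(\mathbb{B}^d)=d$, exactly as you do, while your $v\mapsto -v$ symmetrization supplies the step the paper leaves implicit in passing to the stated two-point form, and your Rademacher/mollification discussion handles regularity that the original proof glosses over.
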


Based on Lemma \ref{lem:grad}, we define the two-point stochastic gradient estimator used in this work as
\[
g(x, \mu; v, \xi) = \frac{d}{2\mu} \bigl(F(x + \mu v; \xi) - F(x - \mu v; \xi)\bigr) v.
\]
Under Assumption \ref{asm:oracle}, this estimator is an unbiased estimator of $\nabla f_{\mu}(x)$ when directions are sampled isotropically.

\section{POEM-CMA: Parameter-Free Stochastic Zeroth-Order Optimization with Covariance Adaptation}
\label{sec:poem-cma}

We propose \textbf{POEM-CMA}, a parameter-free stochastic zeroth-order method with covariance matrix adaptation. The algorithm extends the POEM framework by replacing isotropic random directions with anisotropic sampling guided by an adaptively learned covariance matrix $\hat{\Sigma}$. This allows the method to focus exploration on the most informative directions of the problem.

The update rule follows the projected stochastic gradient descent framework:
\begin{equation}
x_{t+1} = \Pi_X (x_t - \eta_t g_t),
\end{equation}
where $\eta_t > 0$ is the step size. Crucially, to adapt to the underlying low-rank geometry without introducing directional skew under full anisotropy, the gradient estimator must compensate for the non-uniform spatial density of the sampled directions. By Lemma 3, the normalized random vectors $v_t$ generate a second-moment matrix proportional to $\hat{\Sigma}/\operatorname{tr}(\hat{\Sigma})$. To systematically invert this geometric distortion within the active low-rank subspace $\mathcal{S}$, POEM-CMA invokes the principle of matrix-valued whitening and incorporates the dual subspace preconditioning matrix $\tilde{\Sigma} \triangleq \operatorname{tr}(\hat{\Sigma})\pinv{\hat{\Sigma}}$. This acts as a matrix-valued generalization of the effective dimension, leading to our novel unbiased anisotropic two-point gradient estimator:
\begin{equation}
g_t \triangleq \frac{1}{2\mu_t} \Big( F(x_t + \mu_t v_t; \xi_t^+) - F(x_t - \mu_t v_t; \xi_t^-) \Big) \tilde{\Sigma} v_t,
\end{equation}
where $\mu_t$ depends on the empirical effective dimension $d^* = \operatorname{tr}(\hat{\Sigma})/\lambda_{\max}(\hat{\Sigma})$, and $v_t = u_t / \|u_t\|_2$ with $u_t \sim \mathcal{N}(0, \hat{\Sigma})$.

The theoretical justification for this dimension reduction and its consequence on subspace unbiasedness are fully established in Section~\ref{app:variance_contraction}.

Inspired by the POEM \cite{ren2025poem} and DoG method \cite{ivgi2023dog}, we use a fully parameter-free step size based on the ratio of the maximum traveled distance to the accumulated norm of gradient estimates. Specifically, we define
\[
G_t \triangleq \sum_{k=0}^t \|g_k\|^2, \qquad
\bar{r}_t \triangleq \max_{0 \leqslant k \leqslant t} \|x_k - x_0\| \vee r_\epsilon,
\]
where $r_\epsilon > 0$ is a small initial movement constant. The step size is then set as
\begin{align}\label{eq:eta-r-g}
    \eta_t \triangleq \frac{\bar{r}_t}{\sqrt{G_t}}.
\end{align}

Additionally, we adapt the smoothing parameter to the effective dimension:
\begin{align}\label{def:smooth}
    \mu_t \triangleq \bar{r}_t \sqrt{\frac{d^*}{t+1}}.
\end{align}
This choice is larger than the smoothing parameters typically used in existing zeroth-order methods and improves numerical stability, especially in high-dimensional settings.

The complete POEM-CMA algorithm is presented in Algorithm~\ref{alg:poem-cma}.

\begin{algorithm}[H]
\caption{POEM-CMA (Parameter-Free Zeroth-Order with Covariance Matrix Alignment)}
\label{alg:poem-cma}
\begin{algorithmic}[1]
\State \textbf{Input:} $x_0 \in X$, $r_\epsilon \in (0, D_X]$, $T \geq 1$
\State $\bar{r}_{-1} = r_\epsilon$, \quad $G_{-1} = 0$
\State $d^* = \operatorname{tr}(\hat{\Sigma})/\lambda_{\max}(\hat{\Sigma})$ \Comment{See Algorithm~\ref{alg:estimate-dstar}}
\State $\tilde{\Sigma} \gets \operatorname{tr}(\hat{\Sigma}) \cdot \pinv{\hat{\Sigma}}$ \Comment{Subspace Preconditioning Matrix}
\For{$t = 0, \dots, T-1$}
    \State $\bar{r}_t = \max\{\bar{r}_{t-1}, \|x_t - x_0\|\}$
    \State $u_t \sim \mathcal{N}(0, \hat{\Sigma})$, \quad $v_t \gets u_t / \|u_t\|_2$ \Comment{Anisotropic direction}
    \State $\mu_t = \bar{r}_t \sqrt{\frac{d^*}{t+1}}$

    \State $F^+ \gets F(x_t + \mu_t v_t; \xi_t^+)$
    \State $F^- \gets F(x_t - \mu_t v_t; \xi_t^-)$
    \State $\Delta F_t \gets F^+ - F^-$
    \State $g_t \gets \frac{1}{2\mu_t} \Delta F_t \, \tilde{\Sigma} v_t$ \Comment{Unbiased Anisotropic Estimator}
    
    \State $G_t = G_{t-1} + \|g_t\|^2$
    \State $\eta_t = \frac{\bar{r}_t}{\sqrt{G_t}}$
    \State $x_{t+1} = \Pi_{X}(x_t - \eta_t g_t)$
\EndFor
\State \textbf{Output:} $\bar{x}_{\tau_T}$ where $\displaystyle \tau_T \triangleq \arg\max_{t \leqslant T} \sum_{k=0}^{t-1} \frac{\bar{r}_k}{\bar{r}_t}$
\end{algorithmic}
\end{algorithm}

The main differences from the original POEM are the use of anisotropic directions sampled from $\mathcal{N}(0, \hat{\Sigma})$ and the replacement of the ambient dimension $d$ with the effective dimension $d^*$ in both the gradient estimator and the smoothing parameter. These modifications allow the algorithm to adapt to the intrinsic geometry of the problem while preserving the fully parameter-free nature of POEM.

\section{Theoretical Analysis}
\label{sec:theory}

In this section, we provide the theoretical analysis of the proposed POEM-CMA algorithm. We first present several key technical lemmas, followed by the main convergence theorem.

\subsection{Subspace Variance Contraction and Dimension Independence}
\label{app:variance_contraction}

Here we provide the theoretical justification for replacing the ambient dimension $d$ with the empirical effective dimension $d^*$ within the anisotropic zeroth-order framework. The core mechanism relies on demonstrating that the variance of the preconditioned gradient estimator depends strictly on the intrinsic dimensionality of the active subspace $\mathcal{S}$.

\begin{lemma}[Subspace Concentration and Moments]
\label{lem:lemma3}
Let $u_t \in \mathbb{R}^d$ be a random vector distributed as $u_t \sim \mathcal{N}(0, \hat{\Sigma})$, where $\hat{\Sigma}$ is a symmetric positive semi-definite matrix with $\operatorname{rank}(\hat{\Sigma}) = d^* \leqslant d$. Let $v_t = u_t / \|u_t\|_2$ be the normalized direction. 
Then, the second-moment matrix of $v_t$ satisfies:
\begin{equation}
    \mathbb{E}[v_t v_t^\top] = \frac{\hat{\Sigma}}{\operatorname{tr}(\hat{\Sigma})} + \mathcal{O}\left(\frac{1}{d^*}\right),
\end{equation}
where the $\mathcal{O}(1/d^*)$ error term approaches zero in high dimensions due to Gaussian concentration properties. Furthermore, $v_t$ strictly resides on the unit sphere within the active subspace $\mathcal{S}$ and satisfies the central symmetry $v_t \stackrel{d}{=} -v_t$.
\end{lemma}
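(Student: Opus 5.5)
The plan is to diagonalize $\hat{\Sigma}$ and reduce everything to a standard Gaussian in the active subspace. Write $\hat{\Sigma} = U\Lambda U^\top$ with $U \in \mathbb{R}^{d\times r}$ having orthonormal columns spanning $\mathcal{S} = \operatorname{range}(\hat{\Sigma})$, $r = \operatorname{rank}(\hat{\Sigma})$, and $\Lambda = \operatorname{diag}(\lambda_1,\dots,\lambda_r)$ with $\lambda_i > 0$. Then $u_t \stackrel{d}{=} U\Lambda^{1/2} z$ with $z \sim \mathcal{N}(0, I_r)$.

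The two structural claims follow directly from this representation.
\begin{itemize}
\item \emph{Support.} Since $u_t \in \mathcal{S}$ almost surely and $\|u_t\| > 0$ almost surely (because $r \geq 1$), we get $v_t \in \mathcal{S} \cap \mathbb{S}^{d-1}$ almost surely.
\item \emph{Symmetry.} The law of $z$ is invariant under $z \mapsto -z$, so $u_t \stackrel{d}{=} -u_t$. Normalization is odd, hence $v_t \stackrel{d}{=} -v_t$.
\end{itemize}

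For the second moment, write $Q \triangleq \|u_t\|^2 = \sum_j \lambda_j z_j^2$, so that
\[
\mathbb{E}[v_tv_t^\top] = U\, M\, U^\top, \qquad M_{ij} = \sqrt{\lambda_i\lambda_j}\,\mathbb{E}[z_iz_j/Q].
\]
The off-diagonal entries vanish by flipping the sign of $z_i$ alone, which leaves $Q$ unchanged. For the diagonal entries I will use the Laplace identity $1/Q = \int_0^\infty e^{-sQ}\,ds$ and Gaussian integration:
\[
M_{ii} = \int_0^\infty \frac{\lambda_i}{1+2s\lambda_i}\prod_{j=1}^r (1+2s\lambda_j)^{-1/2}\,ds .
\]
This formula is exact. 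Note also that $M_{ii} \leq 1$ trivially, since $\lambda_i z_i^2 \leq Q$.

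The remaining task is to compare $M_{ii}$ with $\lambda_i/\operatorname{tr}(\hat{\Sigma})$. The heuristic is concentration of $Q$: we have $\mathbb{E}Q = \operatorname{tr}(\hat{\Sigma})$ and
\[
\operatorname{Var} Q = 2\operatorname{tr}(\hat{\Sigma}^2) \leq 2\lambda_{\max}\operatorname{tr}(\hat{\Sigma}),
\]
so the relative fluctuation of $Q$ is of order $\sqrt{2/d^*}$ with $d^* = \operatorname{tr}(\hat{\Sigma})/\lambda_{\max}$. A second-order expansion $1/Q = 1/\operatorname{tr} - (Q-\operatorname{tr})/\operatorname{tr}^2 + R$, together with $\mathbb{E}[\lambda_i z_i^2(Q-\operatorname{tr})] = 2\lambda_i^2$, indicates the error is of order $\lambda_i\lambda_{\max}/\operatorname{tr}^2 \lesssim 1/(d^*)^2$ per entry. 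Summing this in operator norm (the matrix is diagonal in the eigenbasis) gives at most $\mathcal{O}(1/d^*)$, which is what the statement claims.

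To make the expansion rigorous I plan to work from the exact integral rather than from the Taylor series. Using $\log(1+x) \geq x - x^2/2$, I will bound the product $\prod_j(1+2s\lambda_j)^{-1/2}$ above and below by $e^{-s\operatorname{tr}}$ times correction factors. The corrections are controlled by $s^2\operatorname{tr}(\hat{\Sigma}^2) \leq s^2\lambda_{\max}\operatorname{tr}$, and I will split the $s$-integral at $s \sim 1/\operatorname{tr}$.

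I expect the main obstacle to be the regime where $d^*$ is small, or where a few large eigenvalues dominate. There the lower tail of $Q$ near $0$ matters and the perturbative expansion is not uniformly valid. My first attempt will be to rely only on the trivial bound $0 \preceq \mathbb{E}[v_tv_t^\top] \preceq I$ whenever $d^* \leq C$, which is consistent with an $\mathcal{O}(1/d^*)$ error with a constant. The expansion is then used only for $d^* \geq C$.

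One interpretive point must also be fixed. The statement sets $\operatorname{rank}(\hat{\Sigma}) = d^*$, whereas the paper's definition of $d^*$ is $\operatorname{tr}/\lambda_{\max}$, and in general $\operatorname{tr}/\lambda_{\max} \leq r$ with equality only in the isotropic-on-$\mathcal{S}$ case. I will prove the bound in terms of $\operatorname{tr}/\lambda_{\max}$, which is the quantity governing the concentration. It therefore implies the statement under either reading.
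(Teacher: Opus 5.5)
Your argument is sound, and for the second-moment claim it takes a different and more rigorous route than the paper.

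\textbf{Comparison with the paper.} Both proofs start from the same reduction $u_t=U\Lambda^{1/2}z$, which settles the support claim. The paper never argues the symmetry claim; you do. For the moment matrix, the paper's ``first-order delta method'' replaces $1/\|u_t\|^2$ by $1/\mathbb{E}\|u_t\|^2=1/\operatorname{tr}(\hat{\Sigma})$. It then points to Hanson--Wright for an $\mathcal{O}(1/d^*)$ remainder, with $d^*$ taken to be the rank. It never bounds that remainder, nor the lower tail of $\|u_t\|^2$, where the inverse is unbounded. The paper's route is shorter but yields no checkable constant. Your route has three ingredients: the exact Laplace formula, the exact vanishing of the off-diagonal entries, and elementary two-sided bounds. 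Together they turn the heuristic into an actual estimate. They also show that the controlling quantity is $\operatorname{tr}(\hat{\Sigma}^2)/\operatorname{tr}(\hat{\Sigma})^2\le\lambda_{\max}/\operatorname{tr}(\hat{\Sigma})$, not the rank. The lower bound is immediate: $\log(1+x)\le x$ and $1/(1+x)\ge 1-x$ give $M_{ii}\ge \lambda_i/\operatorname{tr}(\hat{\Sigma})-2\lambda_i^2/\operatorname{tr}(\hat{\Sigma})^2$.

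\textbf{Three corrections.}

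(i) Split the upper-bound integral at $s_0=1/(2\lambda_{\max})$, not at $s\sim 1/\operatorname{tr}(\hat{\Sigma})$. Beyond $C/\operatorname{tr}(\hat{\Sigma})$, the integral of $\lambda_i e^{-s\operatorname{tr}(\hat{\Sigma})}$ is still a fixed fraction $e^{-C}$ of the main term, so that region is not a tail. Write $P(s)=\prod_j(1+2s\lambda_j)^{-1/2}$.
\begin{itemize}
\item On $[0,s_0]$ you have $y\triangleq s^2\operatorname{tr}(\hat{\Sigma}^2)\le s\operatorname{tr}(\hat{\Sigma})/2$. Then $e^{y}\le 1+ye^{y}$ bounds the excess by $16\lambda_i\operatorname{tr}(\hat{\Sigma}^2)/\operatorname{tr}(\hat{\Sigma})^3$.
\item On $[s_0,\infty)$, concavity ($\log(1+x)\ge x\log 2$ on $[0,1]$) gives $P(s_0)\le 2^{-d^*/2}$. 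Keeping only the top factor then bounds the whole tail by $2^{-d^*/2}$.
\end{itemize}
With this split, no separate small-$d^*$ case is needed.

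(ii) The error matrix is diagonal in the eigenbasis. Its operator norm is therefore the largest entry, $\mathcal{O}(1/(d^*)^2)$. Summing the entries gives the trace norm, $\mathcal{O}(1/d^*)$. Your phrase ``summing in operator norm'' conflates the two, although your conclusion stands.

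(iii) Your closing claim is backwards. Since $\operatorname{tr}(\hat{\Sigma})/\lambda_{\max}\le r$, a bound of order $\lambda_{\max}/\operatorname{tr}(\hat{\Sigma})$ is \emph{weaker} than one of order $1/r$. The rank-only reading is in fact false. Take $\hat{\Sigma}=\operatorname{diag}(1,\tfrac{1}{r-1},\dots,\tfrac{1}{r-1})$. The $(1,1)$ entry of $\hat{\Sigma}/\operatorname{tr}(\hat{\Sigma})$ is $\tfrac12$. By dominated convergence, $\mathbb{E}[(v_t)_1^2]\to\mathbb{E}[z^2/(1+z^2)]\approx 0.34$ as $r\to\infty$, so the error stays of constant size.

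If instead the rank and $\operatorname{tr}(\hat{\Sigma})/\lambda_{\max}$ coincide, as the literal hypothesis forces, the spectrum on $\mathcal{S}$ is flat. Then rotational invariance gives $\mathbb{E}[v_tv_t^\top]=P_{\mathcal{S}}/r=\hat{\Sigma}/\operatorname{tr}(\hat{\Sigma})$ exactly. So your argument covers the literal statement trivially and the meaningful $\operatorname{tr}/\lambda_{\max}$ version genuinely. It does not cover the rank-only version, and no argument can.
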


\begin{proof}
    The proof leverages a first-order Taylor expansion of the ratio around the expectation of the quadratic form $\mathbb{E}[\|u_t\|_2^2] = \operatorname{tr}(\hat{\Sigma})$ and applies the Hanson-Wright concentration inequality (see full details in Appendix \ref{app:lemma3}).
\end{proof}

\begin{remark}
The result implies that by incorporating the preconditioning matrix $\tilde{\Sigma} \triangleq \operatorname{tr}(\hat{\Sigma})\pinv{\hat{\Sigma}}$, the product $\tilde{\Sigma} \mathbb{E}[v_t v_t^\top]$ recovers the orthogonal subspace projector $P_{\mathcal{S}} + \mathcal{O}(1/d^*)$. This ensures that the variance of the gradient estimator scales with the intrinsic effective dimension $d^*$ rather than the ambient dimension $d$, providing the theoretical justification for the improved convergence rate $\tilde{\mathcal{O}}(d^*)$ of POEM-CMA.
\end{remark}

\subsection{Key Lemmas}

To establish the main convergence results for POEM-CMA, we first present a series of auxiliary lemmas. Our strategy relies on bounding the stochastic gradient estimates and controlling both the martingale noise and the bias introduced by randomized smoothing. We begin by confirming that our anisotropic gradient estimator remains unbiased.

Let $\mathcal{F}_t = \sigma(x_0, v_0, \xi_0, \dots, v_t, \xi_t)$ denote the $\sigma$-algebra generated by all random variables up to iteration $t$. Note that the iterate $x_{t+1}$ is fully determined by $\mathcal{F}_t$.

\begin{lemma}[Subspace Unbiasedness]
\label{lem:unbiased}
Let $\mathcal{S} \subseteq \mathbb{R}^d$ be the subspace spanned by the non-zero eigenvectors of $\hat{\Sigma}$, and let $P_{\mathcal{S}}$ be the orthogonal projection matrix onto $\mathcal{S}$. Then,
\[
\mathbb{E}[g_t \mid \mathcal{F}_{t-1}] = P_{\mathcal{S}} \nabla f_{\mu_t}(x_t).
\]
\end{lemma}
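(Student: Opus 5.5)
The plan is to compute the conditional expectation in three layers: first over the oracle noise $\xi_t^\pm$, then over the direction $v_t$, and finally to identify the resulting vector with $P_{\mathcal{S}}\nabla f_{\mu_t}(x_t)$ via Lemma~\ref{lem:grad} applied inside $\mathcal{S}$.

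\textbf{Step 1: remove the oracle noise.} Conditioned on $\mathcal{F}_{t-1}$, the quantities $x_t$, $\bar r_t$ and $\mu_t$ are fixed. The matrices $\hat\Sigma$ and $\tilde\Sigma$ are also fixed, since they are computed before the loop. The variables $v_t,\xi_t^+,\xi_t^-$ are independent of $\mathcal{F}_{t-1}$. By the tower property and Assumption~\ref{asm:oracle}, we can replace $F(x_t\pm\mu_t v_t;\xi_t^\pm)$ by $f(x_t\pm\mu_t v_t)$. This gives
\[
\mathbb{E}[g_t\mid\mathcal{F}_{t-1}]=\frac{1}{2\mu_t}\,\mathbb{E}_{v_t}\bigl[(f(x_t+\mu_t v_t)-f(x_t-\mu_t v_t))\,\tilde\Sigma v_t\bigr].
\]
Next I use the central symmetry $v_t\stackrel{d}{=}-v_t$ from Lemma~\ref{lem:lemma3}. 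It collapses this to $\frac{1}{\mu_t}\mathbb{E}[f(x_t+\mu_t v_t)\tilde\Sigma v_t]$. Since $v_t\in\mathcal{S}$ and $\tilde\Sigma=\operatorname{tr}(\hat\Sigma)\pinv{\hat\Sigma}$ maps into $\mathcal{S}$, the result lies in $\mathcal{S}$. In other words, only the restriction of $f$ to the affine slice $x_t+\mathcal{S}$ matters.

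\textbf{Step 2: reduce to the isotropic case inside $\mathcal{S}$.} Let $k=\operatorname{rank}(\hat\Sigma)$, which Lemma~\ref{lem:lemma3} identifies with $d^*$. First treat the case where the nonzero spectrum is flat, i.e.\ $\hat\Sigma=cP_{\mathcal{S}}$. Then $v_t$ is uniform on the unit sphere of $\mathcal{S}$, and $\tilde\Sigma=kP_{\mathcal{S}}$. Writing $h(y)=f(x_t+y)$ for $y\in\mathcal{S}\cong\mathbb{R}^k$, the expression is exactly the right-hand side of Lemma~\ref{lem:grad} in dimension $k$. It therefore equals the $\mathcal{S}$-gradient of the $k$-dimensional ball smoothing of $h$, which is $P_{\mathcal{S}}\nabla f_{\mu_t}(x_t)$, with the smoothing taken over $\mathcal{S}$.

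\textbf{Step 3: the general anisotropic case.} Here I would expand $f(x_t+\mu_t v)-f(x_t-\mu_t v)=2\mu_t\langle\nabla f_{\mu_t}(x_t),v\rangle+\text{(odd remainder)}$. The leading term gives $\tilde\Sigma\,\mathbb{E}[v_tv_t^\top]\nabla f_{\mu_t}(x_t)$. By Lemma~\ref{lem:lemma3} and the subsequent Remark, this equals $P_{\mathcal{S}}\nabla f_{\mu_t}(x_t)$ up to the $\mathcal{O}(1/d^*)$ correction. I would then absorb that correction into the identity, as the Remark does.

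\textbf{Main obstacle.} The hard part is exactly Step 3, together with the interpretation of $f_\mu$ in Step 2. When the spectrum of $\hat\Sigma$ is not flat on $\mathcal{S}$, $v_t$ is not uniform on the sphere of $\mathcal{S}$. Also $\mathbb{E}[v_tv_t^\top]$ is only approximately $\hat\Sigma/\operatorname{tr}(\hat\Sigma)$, so exact equality does not follow from Lemma~\ref{lem:lemma3}. Moreover, Lemma~\ref{lem:grad} applied on $\mathcal{S}$ produces the subspace smoothing, not the full-ball $f_\mu$.

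I would first try to make the identity exact by reading $f_{\mu_t}$ as the smoothing over the ellipsoidal ball matched to the law of $v_t$ on $\mathcal{S}$. I would pair that with a divergence-theorem (Stokes) argument in the whitened coordinates $\hat\Sigma^{+1/2}$. If this fails, I would state the lemma with an explicit $\mathcal{O}(L/d^*)$ bias term, using the Lipschitz bound of Assumption~\ref{asm:lipschitz}, and carry that bias into the convergence analysis.
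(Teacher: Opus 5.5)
Step 1 is fine, and Step 2 correctly proves a modified statement. But the obstacle you identify in Step 3 is fatal, not technical: for $g_t$ as defined, the identity is false whenever the nonzero spectrum of $\hat{\Sigma}$ is not flat.

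\textbf{A linear counterexample.} Take $d=2$, $\hat{\Sigma}=\operatorname{diag}(4,1)$ and $f(x)=\langle a,x\rangle$. Then $\mathcal{S}=\mathbb{R}^2$, every smoothing of $f$ has gradient $a$, and the finite difference is exactly $2\mu_t\langle a,v_t\rangle$. Hence $\mathbb{E}[g_t\mid\mathcal{F}_{t-1}]=\tilde{\Sigma}\,\mathbb{E}[v_tv_t^\top]\,a$. Write $u_t=(2z_1,z_2)$ and put $z$ in polar coordinates. Direct integration gives $\mathbb{E}[v_{t,1}^2]=\frac{\sqrt{4}}{\sqrt{4}+\sqrt{1}}=\frac23$ and $\mathbb{E}[v_{t,2}^2]=\frac13$, with zero off-diagonal entries. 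Since $\tilde{\Sigma}=\operatorname{diag}(5/4,5)$, this yields $\mathbb{E}[g_t\mid\mathcal{F}_{t-1}]=\operatorname{diag}(5/6,5/3)\,a\neq a$.

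The same happens for every non-flat spectrum. In the eigenbasis, $\tilde{\Sigma}\,\mathbb{E}[v_tv_t^\top]$ is diagonal with entries $\operatorname{tr}(\hat{\Sigma})\,\mathbb{E}[z_i^2/z^\top\Lambda z]$ for $\lambda_i>0$. If $\lambda_i>\lambda_j$, pairing $z$ with the vector obtained by swapping $z_i$ and $z_j$ shows these two entries differ strictly, so the product cannot be $P_{\mathcal{S}}$.

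\textbf{Comparison with the paper.} The paper's proof breaks exactly where yours does. It replaces the direction average of the finite difference by $\mathbb{E}[v_tv_t^\top]\nabla f_{\mu_t}(x_t)$ via a ``Delta method''; this is not an identity for general convex Lipschitz $f$ under non-uniform sampling. It then inserts Lemma~\ref{lem:lemma3} and declares the $\mathcal{O}(1/d^*)$ term to vanish. An error term cannot be absorbed into an equality.

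\textbf{Your two rescue plans.} Plan (a) fails because the discrepancy already occurs for linear $f$, where no reinterpretation of $f_{\mu_t}$ changes the gradient. Plan (b) needs more than Lemma~\ref{lem:lemma3}. Its additive $\mathcal{O}(1/d^*)$ error is multiplied by $\tilde{\Sigma}$, whose norm is $d^*\kappa(\hat{\Sigma})$, so you only get an $\mathcal{O}(\kappa(\hat{\Sigma})L)$ bias. Moreover, for nonsmooth $f$ the odd remainder in your expansion contributes a further bias that Lemma~\ref{lem:lemma3} does not touch.

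\textbf{What you have actually proved.} Your Step 2 is precisely the case the paper's own hypothesis forces. Lemma~\ref{lem:lemma3} assumes $\operatorname{rank}(\hat{\Sigma})=d^*=\operatorname{tr}(\hat{\Sigma})/\lambda_{\max}(\hat{\Sigma})$. Since $\operatorname{tr}(\hat{\Sigma})\le\operatorname{rank}(\hat{\Sigma})\,\lambda_{\max}(\hat{\Sigma})$, with equality iff all nonzero eigenvalues coincide, that hypothesis means $\hat{\Sigma}=cP_{\mathcal{S}}$ and $\kappa(\hat{\Sigma})=1$. Even then, as you note, you obtain the $\mathcal{S}$-gradient of the smoothing over the unit ball of $\mathcal{S}$. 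This equals $P_{\mathcal{S}}\nabla f_{\mu_t}(x_t)$ for all admissible $f$ only if $\mathcal{S}=\mathbb{R}^d$. For instance, take $f(x)=|x_1|$ on $\mathbb{R}^2$ with $\mathcal{S}=\operatorname{span}(e_1)$, at $x=(a,0)$ with $0<a<\mu_t$. The estimator's mean is $\frac{a}{\mu_t}e_1$, whereas the full-disk smoothing gives a different value, because the $e_1$-marginal of the uniform disk is not uniform. So the lemma holds for all admissible $f$ only in the isotropic case $\hat{\Sigma}=cI_d$.

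\textbf{An exact version via whitening.} Your whitening idea does yield an exact statement, but for a modified estimator. Let $A=\hat{\Sigma}^{1/2}$, $k=\operatorname{rank}(\hat{\Sigma})$, and $s_t$ uniform on the unit sphere of $\mathcal{S}$. Apply Lemma~\ref{lem:grad} to $y\mapsto f(x_t+Ay)$ on $\mathcal{S}$. It shows that $\frac{k}{2\mu_t}\bigl(f(x_t+\mu_tAs_t)-f(x_t-\mu_tAs_t)\bigr)A^{+}s_t$ is exactly unbiased for the $\mathcal{S}$-gradient of $x\mapsto\mathbb{E}_w[f(x+\mu_tAw)]$, with $w$ uniform on the unit ball of $\mathcal{S}$. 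Its directions $As_t/\|As_t\|$ have exactly the law of $v_t$. The difference is that the query radius $\mu_t\|As_t\|$ is random. Replacing that random radius by a constant, as $g_t$ does, is what produces the bias.
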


\begin{proof}
See Appendix \ref{app:unbiased}.
\end{proof}

Having established the unbiasedness of the estimator, we next need to control its variance. For pedagogical clarity, we first recall the standard norm and second-moment bounds for the conventional isotropic zeroth-order estimator, as established in prior literature.

\begin{lemma}[Norm and Second Moment Bound (Isotropic Case)]
\label{lem:second-moment-poem-cma}
Assume $F(\cdot;\xi)$ is $L$-Lipschitz continuous almost surely. Let $v \sim \mathcal{U}(\mathbb{S}^{d-1})$ or $v \sim \mathcal{N}(0,I_d)$ with subsequent normalization $\|v\|=1$. For the two-point gradient estimator
\[
g = \frac{d}{2\mu} \bigl( F(x + \mu v;\xi^+) - F(x - \mu v;\xi^-) \bigr) v,
\]
we have
\[
\mathbb{E}\bigl[ \|g\|^2 \bigr] \leqslant c L^2 d,
\]
and almost surely $\|g_t\| \le L d$. Moreover,
\[
\mathbb{E}\!\left[ \|g_t\|^2 \mid \mathcal{F}_{t-1} \right] \le c L^2 d.
\]
\end{lemma}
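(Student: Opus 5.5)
The proof splits into two parts. The almost-sure bound $\|g\|\le Ld$ follows directly from Lipschitz continuity. The second-moment bound $cL^2d$ uses concentration of Lipschitz functions on the sphere, in the style of Shamir~\cite{shamir2017optimal}. The conditional version then follows because, given $\mathcal{F}_{t-1}$, the point $x_t$ and the radius $\mu_t$ are fixed, while $v_t,\xi_t^\pm$ are fresh.

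\textbf{Almost-sure bound.} First I will clarify the noise model. As written, the estimator uses two independent samples $\xi^+\neq\xi^-$, and then the difference $F(x+\mu v;\xi^+)-F(x-\mu v;\xi^-)$ is not controlled by Lipschitz continuity alone. I will therefore read the statement, as in the two-point literature, with a common sample $\xi^+=\xi^-=\xi$. With a common sample,
\[
|F(x+\mu v;\xi)-F(x-\mu v;\xi)|\le 2L\mu\|v\|=2L\mu,
\]
so $\|g\|\le \frac{d}{2\mu}\cdot 2L\mu\cdot\|v\|=Ld$ almost surely. This holds whether $v$ is uniform on the sphere or a normalized Gaussian, since both satisfy $\|v\|=1$. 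The two laws are in fact identical by rotation invariance, so the rest of the argument can use $v\sim\mathcal{U}(\mathbb{S}^{d-1})$ without loss of generality.

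\textbf{Second moment.} Fix $x$, $\mu$ and $\xi$, and set $h(v)=F(x+\mu v;\xi)$, which is $L\mu$-Lipschitz on the sphere. Because $v\stackrel{d}{=}-v$, I can subtract the constant $m=\mathbb{E}_v h(v)$ from both terms and use $(a-b)^2\le 2a^2+2b^2$:
\[
\mathbb{E}\|g\|^2=\frac{d^2}{4\mu^2}\mathbb{E}\bigl(h(v)-h(-v)\bigr)^2\le \frac{d^2}{\mu^2}\mathbb{E}\bigl(h(v)-m\bigr)^2 .
\]
Concentration of measure on $\mathbb{S}^{d-1}$ (Lévy / the Poincaré inequality on the sphere) gives $\mathrm{Var}(h(v))\le c'(L\mu)^2/d$. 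Substituting yields $\mathbb{E}\|g\|^2\le c'L^2 d$, and averaging over $\xi$ preserves the bound. This concentration step is the only nontrivial one and the place where I expect difficulty. Citing Shamir's Lemma~9 (or Duchi et al.) gives it directly; a self-contained route is the Poincaré inequality on the sphere with constant $1/(d-1)$, using $d-1\ge d/2$ for $d\ge2$ and treating $d=1$ trivially.

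\textbf{Conditional version.} Since $x_t$ and $\mu_t$ are $\mathcal{F}_{t-1}$-measurable, and $(v_t,\xi_t)$ is independent of $\mathcal{F}_{t-1}$, I apply the bound above conditionally with $x=x_t$ and $\mu=\mu_t$. This gives $\mathbb{E}[\|g_t\|^2\mid\mathcal{F}_{t-1}]\le cL^2d$ with $c$ an absolute constant independent of $x_t$ and $\mu_t$.
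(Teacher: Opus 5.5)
Your proof is correct. The paper gives no argument of its own for this lemma; it only cites Lemma~4 of POEM. The reasoning you give (Lipschitz continuity for the almost-sure bound, then central symmetry $v\stackrel{d}{=}-v$ to reduce to $4\,\mathrm{Var}(h(v))$, then concentration on $\mathbb{S}^{d-1}$, then conditioning on $\mathcal{F}_{t-1}$) is the standard Shamir-style proof behind that cited result. So you are supplying the omitted proof rather than taking a different route.

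It is also the same mechanism the paper uses for its anisotropic analogue, Lemma~\ref{lem:second-anisotropic} via Lemma~\ref{lem:spherical_variance}. There, L\'evy's tail bound around the median plays the role of your Poincar\'e inequality. Your version is slightly cleaner: it bounds the variance about the mean directly with constant $1/(d-1)$, giving $c=2$.

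Your insistence on a common sample $\xi^+=\xi^-$ is necessary, not just convenient. With independent samples, the almost-sure bound fails, and so does the $O(L^2 d)$ second-moment bound. The oracle noise alone contributes roughly $\frac{d^2}{2\mu^2}\mathrm{Var}_\xi F(x;\xi)$ to $\mathbb{E}\|g\|^2$. Your reading is therefore the one under which the statement, and the cited result, actually hold.
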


\begin{proof}
This result follows directly from Lemma 4 in Ren and Luo (POEM) \cite{ren2025poem}.
\end{proof}

While Lemma 5 provides a baseline for uniform sampling, the core of our method relies on directional queries adapted to the problem geometry. The following lemma extends these bounds to our anisotropic framework, demonstrating that the gradient norm scales with the effective dimension \(d^{*}\) instead of the ambient dimension \(d\).

\begin{lemma}[Norm Bound (Anisotropic Case)]
\label{lem:norm-anisotropic}
Assume that $F(\cdot;\xi)$ is $L$-Lipschitz continuous almost surely, i.e., for almost all $\xi$:
\begin{equation*}
    |F(x;\xi) - F(y;\xi)| \leqslant L\|x - y\| \quad \forall x, y \in \mathbb{R}^d.
\end{equation*}
Let $v_t = u_t / \|u_t\|_2$ with $u_t \sim \mathcal{N}(0, \hat{\Sigma})$, and let the gradient estimator be defined as in (4). Then, almost surely:
\begin{equation*}
    \|g_t\| \leqslant L \cdot \lambda_{\max}(\tilde{\Sigma}) = L d^* \kappa(\hat{\Sigma}),
\end{equation*}
where $\kappa(\hat{\Sigma}) = \lambda_{\max}(\hat{\Sigma})/\lambda_{\min}(\hat{\Sigma})$ is the condition number within the active subspace.
\end{lemma}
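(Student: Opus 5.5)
The bound is deterministic, so the plan is to work pathwise from the definition
\[
g_t = \frac{1}{2\mu_t}\bigl(F(x_t+\mu_t v_t;\xi_t^+) - F(x_t-\mu_t v_t;\xi_t^-)\bigr)\tilde{\Sigma} v_t
\]
and bound the two factors separately using $\|g_t\| \le \frac{|\Delta F_t|}{2\mu_t}\,\|\tilde{\Sigma} v_t\|$.

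\textbf{Step 1 (scalar factor).} I would show $|\Delta F_t| \le L\,\|2\mu_t v_t\| = 2L\mu_t$, using Assumption~\ref{asm:lipschitz} together with $\|v_t\|=1$ from Lemma~\ref{lem:lemma3}.

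Here lies the main obstacle: the two evaluations use different samples $\xi_t^+ \neq \xi_t^-$. Lipschitzness of each fixed $F(\cdot;\xi)$ does not control $F(x+\mu v;\xi^+) - F(x-\mu v;\xi^-)$. I would first try to read the oracle of Assumption~\ref{asm:oracle} as returning both values with a common sample, i.e.\ $\xi_t^+=\xi_t^-$, as in POEM's two-point setting. Under that reading the cancellation gives $|\Delta F_t|/(2\mu_t) \le L$. Otherwise the almost-sure bound has to carry an extra term $|F(y;\xi^+)-F(y;\xi^-)|/(2\mu_t)$, which cannot be bounded without further assumptions. I would state the common-sample convention explicitly.

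\textbf{Step 2 (matrix factor).} Since $u_t \sim \mathcal{N}(0,\hat{\Sigma})$ lies almost surely in $\mathcal{S}=\operatorname{range}(\hat{\Sigma})$, we have $v_t\in\mathcal{S}$. On $\mathcal{S}$ the operator $\pinv{\hat{\Sigma}}$ has eigenvalues $1/\lambda_i$ over the nonzero eigenvalues $\lambda_i$ of $\hat{\Sigma}$. Hence
\[
\|\tilde{\Sigma} v_t\| \le \operatorname{tr}(\hat{\Sigma})\,\lambda_{\max}(\pinv{\hat{\Sigma}}) = \frac{\operatorname{tr}(\hat{\Sigma})}{\lambda_{\min}(\hat{\Sigma})} = \lambda_{\max}(\tilde{\Sigma}),
\]
where $\lambda_{\min}$ denotes the smallest nonzero eigenvalue.

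\textbf{Step 3 (identity).} I would verify the stated equality by writing
\[
\frac{\operatorname{tr}(\hat{\Sigma})}{\lambda_{\min}(\hat{\Sigma})} = \frac{\operatorname{tr}(\hat{\Sigma})}{\lambda_{\max}(\hat{\Sigma})}\cdot\frac{\lambda_{\max}(\hat{\Sigma})}{\lambda_{\min}(\hat{\Sigma})} = d^*\,\kappa(\hat{\Sigma}).
\]

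Combining Steps 1–3 gives $\|g_t\| \le L\,\lambda_{\max}(\tilde{\Sigma}) = L d^*\kappa(\hat{\Sigma})$ almost surely.
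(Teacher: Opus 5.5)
Your argument is the same as the paper's: bound $|\Delta F_t|\le 2L\mu_t$ by Lipschitzness, bound $\|\tilde{\Sigma}v_t\|\le\lambda_{\max}(\tilde{\Sigma})=\operatorname{tr}(\hat{\Sigma})/\lambda_{\min}(\hat{\Sigma})$, and rewrite this as $d^*\kappa(\hat{\Sigma})$. The obstacle you flag is real and the paper's proof skips it, applying Lipschitzness directly to $F(x_t+\mu_t v_t;\xi_t^+)-F(x_t-\mu_t v_t;\xi_t^-)$; your explicit common-sample convention $\xi_t^+=\xi_t^-$, which matches the wording of Assumption~\ref{asm:oracle}, is exactly what makes that step valid.
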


\begin{proof}
    See Appendix \ref{app:norm-anisotropic}.
\end{proof}

While Lemma 6 yields a deterministic upper bound on the estimator's norm, establishing a tighter bound on its expected second moment is crucial for refining our convergence rate. By leveraging Gaussian concentration inequalities, we obtain the following variance bound.

\begin{lemma}[Second Moment Bound (Anisotropic Case)]
\label{lem:second-anisotropic}
Assume that $F(\cdot;\xi)$ is $L$-Lipschitz continuous almost surely. Let the gradient estimator $g_t$ be defined as in (4), and let the effective dimension be $d^* = \operatorname{tr}(\hat{\Sigma})/\lambda_{\max}(\hat{\Sigma})$. Then, the conditional expected second moment of the estimator satisfies:
\begin{equation*}
    \mathbb{E}\bigl[ \|g_t\|^2 \mid \mathcal{F}_{t-1} \bigr] \leqslant 8 d^* \kappa(\hat{\Sigma}) L^2,
\end{equation*}
where $\kappa(\hat{\Sigma}) \triangleq \lambda_{\max}(\hat{\Sigma})/\lambda_{\min}(\hat{\Sigma})$ is the condition number within the active subspace.
\end{lemma}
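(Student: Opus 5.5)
The plan is to split $\|g_t\|^2$ into a scalar finite-difference factor and a preconditioned direction factor:
\[
\|g_t\|^2=\Big(\tfrac{\Delta F_t}{2\mu_t}\Big)^2\,\|\tilde{\Sigma}v_t\|^2,
\qquad
\|\tilde{\Sigma}v_t\|^2=\operatorname{tr}(\hat{\Sigma})^2\,\frac{u_t^\top(\pinv{\hat{\Sigma}})^2u_t}{\|u_t\|^2}.
\]
Then I would show that the scalar factor has conditional second moment of order $L^2/d^*$, while the direction factor is of order $\operatorname{tr}(\hat{\Sigma})\operatorname{rank}(\hat{\Sigma})/\lambda_{\min}(\hat{\Sigma})$. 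Throughout I adopt the standing assumption of Lemma~\ref{lem:lemma3} that $\operatorname{rank}(\hat{\Sigma})=d^*$. The product is then $L^2\,\frac{\operatorname{tr}(\hat{\Sigma})}{\lambda_{\min}(\hat{\Sigma})}=L^2d^*\kappa(\hat{\Sigma})$, and the constant $8$ collects the losses. Conditioning on $\mathcal{F}_{t-1}$ fixes $x_t$, $\mu_t$ and $\hat{\Sigma}$, so all expectations below are over $u_t,\xi_t^\pm$ only.

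\emph{Scalar factor.} Write $u_t=\hat{\Sigma}^{1/2}z$ with $z\sim\mathcal{N}(0,I_d)$, and for a fixed $\xi$ set $h_\xi(z)\triangleq F\bigl(x_t+\mu_t\hat{\Sigma}^{1/2}z/\|\hat{\Sigma}^{1/2}z\|;\xi\bigr)$. By central symmetry (Lemma~\ref{lem:lemma3}) and $(a-b)^2\le 2(a-c)^2+2(b-c)^2$ with $c=\mathbb{E}h$, we get $\mathbb{E}(\Delta F_t)^2\le 4\operatorname{Var}(h)$. This step is clean when $\xi_t^+=\xi_t^-$; otherwise I would follow Shamir/POEM and treat the two oracle samples via a common $\xi$, as in Lemma~\ref{lem:second-moment-poem-cma}.

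For the variance, the map $z\mapsto \hat{\Sigma}^{1/2}z/\|\hat{\Sigma}^{1/2}z\|$ has Jacobian norm at most $\sqrt{\lambda_{\max}(\hat{\Sigma})}/\|u_t\|$. Hence $h_\xi$ is locally Lipschitz with constant $L\mu_t\sqrt{\lambda_{\max}}/\|u_t\|$. On the event $\{\|u_t\|^2\ge\operatorname{tr}(\hat{\Sigma})/2\}$, which the Hanson--Wright bound already used for Lemma~\ref{lem:lemma3} makes overwhelmingly likely, this constant is $\le L\mu_t\sqrt{2/d^*}$. Gaussian Poincar\'e then gives $\operatorname{Var}(h)\lesssim 2L^2\mu_t^2/d^*$. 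The complementary event carries exponentially small probability in $d^*$. There I would use only the crude almost-sure bound of Lemma~\ref{lem:norm-anisotropic}, and its contribution is absorbed into the constant.

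\emph{Direction factor.} Here $u_t^\top(\pinv{\hat{\Sigma}})^2u_t=z^\top P_{\mathcal{S}}\pinv{\hat{\Sigma}}P_{\mathcal{S}}z\le\|P_{\mathcal{S}}z\|^2/\lambda_{\min}(\hat{\Sigma})$. On the good event $\|u_t\|^2\ge\operatorname{tr}(\hat{\Sigma})/2$, which gives
\[
\|\tilde{\Sigma}v_t\|^2\le \frac{2\operatorname{tr}(\hat{\Sigma})\,\|P_{\mathcal{S}}z\|^2}{\lambda_{\min}(\hat{\Sigma})},
\qquad
\mathbb{E}\|P_{\mathcal{S}}z\|^2=\operatorname{rank}(\hat{\Sigma})=d^*.
\]
\emph{Combining.} The two factors are dependent through $z$, so I would not simply multiply expectations. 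Instead I would bound $\mathbb{E}[(\Delta F_t)^2\|P_{\mathcal{S}}z\|^2]$ directly, using that $\|P_{\mathcal{S}}z\|^2$ concentrates at $d^*$: restrict to the event $\|P_{\mathcal{S}}z\|^2\le 2d^*$ and bound the tail by the almost-sure bounds. Collecting factors gives
\[
\frac{1}{4\mu_t^2}\cdot\frac{4L^2\mu_t^2}{d^*}\cdot\frac{2\operatorname{tr}(\hat{\Sigma})\cdot d^*}{\lambda_{\min}(\hat{\Sigma})}\cdot O(1)=O(1)\,d^*\kappa(\hat{\Sigma})L^2,
\]
using $\operatorname{tr}(\hat{\Sigma})/\lambda_{\min}(\hat{\Sigma})=d^*\kappa(\hat{\Sigma})$. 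I would tune the event thresholds so that the constant is $8$.

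\emph{Main obstacle.} The delicate step is the scalar factor: obtaining the $1/d^*$ gain from concentration of a Lipschitz function of the anisotropically normalized Gaussian, uniformly over $\xi$. Without that gain one only gets $L^2\lambda_{\max}(\tilde{\Sigma})^2=(d^*\kappa)^2L^2$ from Lemma~\ref{lem:norm-anisotropic}. A second difficulty is that the rank of $\hat{\Sigma}$, rather than $d^*$, naturally appears in the direction factor. The stated bound relies on identifying the two, as in the hypothesis of Lemma~\ref{lem:lemma3}. For a general $\hat{\Sigma}$ the argument yields $8\operatorname{rank}(\hat{\Sigma})\kappa(\hat{\Sigma})L^2$ instead, and I would flag this explicitly.
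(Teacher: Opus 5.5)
The step that fails is the truncation you use in both the scalar and the combining step. On the complement of $\{\|u_t\|^2\ge\operatorname{tr}(\hat{\Sigma})/2\}$ you fall back on the almost-sure bound $\|g_t\|^2\le L^2\bigl(d^*\kappa(\hat{\Sigma})\bigr)^2$ of Lemma~\ref{lem:norm-anisotropic}. Hanson--Wright, however, controls the probability of that event only through $d^*$ (roughly $e^{-cd^*}$), never through $\kappa$. The leftover term $L^2(d^*\kappa)^2e^{-cd^*}$ is therefore $O(d^*\kappa L^2)$ only when $\kappa\lesssim e^{cd^*}/d^*$. For $d^*=O(1)$ the ``good'' event does not even have probability close to one.

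This cannot be repaired, because the inequality is false for genuinely anisotropic $\hat{\Sigma}$. Take $d=2$, $\hat{\Sigma}=\operatorname{diag}(k,1)$ and $F(x;\xi)=Lx_1$. With $u_t=(\sqrt{k}z_1,z_2)$ one gets $\Delta F_t/(2\mu_t)=L\sqrt{k}z_1/\|u_t\|$ and $\tilde{\Sigma}v_t=(k+1)(z_1/\sqrt{k},z_2)/\|u_t\|$, so
\[
\|g_t\|^2=(k+1)^2L^2\,\frac{z_1^4+k z_1^2 z_2^2}{(k z_1^2+z_2^2)^2}\;\ge\;(k+1)^2L^2\,\frac{k\cos^2\theta\,\sin^2\theta}{(k\cos^2\theta+\sin^2\theta)^2},
\]
where $\theta$ is the uniform angle of $(z_1,z_2)$. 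Substituting $s=\tan\theta$ gives $\mathbb{E}\bigl[\|g_t\|^2\mid\mathcal{F}_{t-1}\bigr]\ge\frac{(k+1)^2\sqrt{k}}{2(1+\sqrt{k})^2}L^2\sim\tfrac12 k^{3/2}L^2$. Here $d^*\kappa=k+1$ and $\operatorname{rank}(\hat{\Sigma})\kappa=2k$. So both $8d^*\kappa L^2$ and your fallback $8\operatorname{rank}(\hat{\Sigma})\kappa L^2$ fail for large $k$: at $k=10^4$ the left side is about $4.9\cdot 10^5L^2$, against $8\cdot 10^4L^2$ and $1.6\cdot 10^5L^2$. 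The mass sits where $kz_1^2\approx z_2^2$, i.e.\ inside your bad event, which here has probability about $1/2$.

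The paper uses the same factorization as you. It closes it by multiplying the two conditional expectations and inserting $\mathbb{E}[v_tv_t^\top]\approx\hat{\Sigma}/\operatorname{tr}(\hat{\Sigma})$ as if exact, and by applying the uniform-sphere Lemma~\ref{lem:spherical_variance} to the non-uniform $v_t$. In this example $\mathbb{E}[v_{t,2}^2]=1/(1+\sqrt{k})$ rather than $1/(k+1)$, and that discrepancy, weighted by $\tilde{\Sigma}_{22}^2=(k+1)^2$, is exactly what is lost. You were right to refuse the decoupling, but truncation does not rescue a bound linear in $\kappa$.

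What your argument does prove is the case fixed by your standing assumption, and that assumption is far stronger than an ``identification''. Since $\operatorname{tr}(\hat{\Sigma})\le\operatorname{rank}(\hat{\Sigma})\lambda_{\max}(\hat{\Sigma})$, with equality iff all nonzero eigenvalues coincide, $\operatorname{rank}(\hat{\Sigma})=d^*$ forces $\hat{\Sigma}=\lambda P_{\mathcal{S}}$ and $\kappa(\hat{\Sigma})=1$. Then $\tilde{\Sigma}v_t=d^*v_t$ deterministically and $v_t$ is uniform on the unit sphere of $\mathcal{S}$. With $\xi_t^+=\xi_t^-$ (your caveat there is correct), the spherical Poincar\'e inequality gives $\mathbb{E}[(\Delta F_t)^2\mid\mathcal{F}_{t-1}]\le 4L^2\mu_t^2/(d^*-1)$. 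Hence $\mathbb{E}[\|g_t\|^2\mid\mathcal{F}_{t-1}]\le\frac{(d^*)^2}{d^*-1}L^2\le 2d^*L^2$ for $d^*\ge 2$, and $\|g_t\|\le L$ when $d^*=1$; no truncation or Hanson--Wright is needed. Once $\kappa>1$ is allowed, no bound of the form $C\,d^*\kappa(\hat{\Sigma})L^2$ or $C\operatorname{rank}(\hat{\Sigma})\kappa(\hat{\Sigma})L^2$ with a universal $C$ can hold, as the example shows. The closing remark of your proposal should therefore be withdrawn rather than presented as the general outcome of your argument.
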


\begin{proof}
    See Appendix \ref{app:second-anisotropic}.
\end{proof}

With the second-moment properties firmly established, we now analyze the optimization trajectory under our dynamic, parameter-free step size schedule. The following lemma bounds the cumulative weighted inner products, which standardly represent the descent step in projected gradient frameworks.

\begin{lemma}[Weighted Regret]
\label{lem:weighted-regret}
The cumulative weighted inner product is bounded by:
\begin{equation}
\sum_{k=0}^{t-1} \bar{r}_k \langle g_k, x_k - x_* \rangle \le \bar{r}_{t-1} (2\bar{s} + \bar{r}_{t-1}) \sqrt{G_{t-1}}.
\end{equation}
\end{lemma}

\begin{proof}
    See Appendix \ref{app:weighted-regret}.
\end{proof}

The bound in Lemma~\ref{lem:weighted-regret} contains stochastic errors from the zeroth-order oracle. To guarantee robust performance, we model these errors as a martingale difference sequence and apply concentration bounds for sub-exponential increments to control the cumulative noise with high probability.

\begin{lemma}[Martingale Difference Noise]
\label{lem:martingale-noise}
With probability at least $1 - \delta$, the cumulative martingale difference term satisfies:
\begin{equation}
    \left| \sum_{k=0}^{t-1} \bar{r}_k \langle \Delta_k, x_k - x_* \rangle \right| \leqslant 8 \bar{r}_{t-1} D \sqrt{\theta_{t,\delta} V_t + B^2 \theta_{t,\delta}^2},
\end{equation}
where $\theta_{t,\delta} = \log(60 \log(6t/\delta))$, $D$ is the diameter of the domain, the uniform step bound satisfies $B \leqslant 2 \bar{r}_{t-1} D L d^* \kappa(\hat{\Sigma})$, $\kappa(\hat{\Sigma})$ is the subspace condition number, and $\Delta_k \triangleq g_k - \mathbb{E}[g_k \mid \mathcal{F}_{k-1}]$ represents the stochastic noise in the gradient estimator.
\end{lemma}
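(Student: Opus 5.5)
The plan is to follow the martingale concentration argument of POEM and DoG, which handles weighted sums with predictable but time-varying weights. Let $X_k \triangleq \bar{r}_k \langle \Delta_k, x_k - x_* \rangle$. We first check that this is a martingale difference sequence with respect to $\{\mathcal{F}_k\}$. The quantities $\bar{r}_k$ and $x_k$ are determined by $\mathcal{F}_{k-1}$. By the definition of $\Delta_k$, $\mathbb{E}[\Delta_k \mid \mathcal{F}_{k-1}] = 0$, and Lemma~\ref{lem:unbiased} identifies this conditional mean as $P_{\mathcal{S}}\nabla f_{\mu_k}(x_k)$. Hence $\mathbb{E}[X_k \mid \mathcal{F}_{k-1}] = 0$. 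We then write $X_k = \bar{r}_k D\, Y_k$ with $Y_k \triangleq \langle \Delta_k, (x_k - x_*)/D\rangle$, using $\|x_k - x_*\| \leqslant D$ from Assumption~\ref{asm:domain}. This separates a predictable, non-decreasing weight $\bar{r}_k D$ from a normalized increment.

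The next step is to bound the increments. Lemma~\ref{lem:norm-anisotropic} gives $\|g_k\| \leqslant L d^*\kappa(\hat{\Sigma})$ almost surely, and Jensen's inequality gives $\|\mathbb{E}[g_k\mid\mathcal{F}_{k-1}]\| \leqslant L d^*\kappa(\hat{\Sigma})$. Therefore $\|\Delta_k\| \leqslant 2 L d^*\kappa(\hat{\Sigma})$, and so
\[
|X_k| \leqslant 2\bar{r}_k D L d^*\kappa(\hat{\Sigma}) \leqslant 2\bar{r}_{t-1} D L d^*\kappa(\hat{\Sigma}) =: B .
\]
For the variance proxy we set $V_t \triangleq \sum_{k<t} \mathbb{E}[Y_k^2 \mid \mathcal{F}_{k-1}]$, or its empirical analogue $\sum_{k<t} Y_k^2$. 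Lemma~\ref{lem:second-anisotropic} bounds this by $8 t d^*\kappa(\hat{\Sigma})L^2$ in expectation. That bound is not needed here; it will be used later when $V_t$ is compared with $G_{t-1}$.

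The core step is a time-uniform concentration inequality for martingales with bounded increments and random predictable weights. We plan to invoke the empirical-Bernstein-type bound of Ivgi et al.~\cite{ivgi2023dog}, reused as a lemma in POEM~\cite{ren2025poem}. It states that for a predictable non-decreasing sequence $y_k \leqslant \bar{y}$ and a martingale difference sequence $Y_k$ with $|Y_k|\leqslant c$, with probability at least $1-\delta$, for all $t$,
\[
\Bigl|\sum_{k<t} y_k Y_k\Bigr| \leqslant 8\, y_{t-1}\sqrt{\theta_{t,\delta}\sum_{k<t}(Y_k-\hat{Y}_k)^2 + c^2\theta_{t,\delta}^2},
\]
where $\theta_{t,\delta} = \log(60\log(6t/\delta))$. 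We apply it with $y_k = \bar{r}_k$ and $Y_k$ scaled by $D$, absorbing the factor $D$ and the $\bar{r}_{t-1}$ from $B$ consistently. The factor $8$ and the form of $\theta_{t,\delta}$ are inherited directly from that lemma. The anisotropic structure enters only through the a.s.\ constant $B$.

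The main obstacle is making the weights fit the hypotheses of the concentration lemma. The weights $\bar{r}_k$ are random, and a fixed-weight Freedman inequality does not apply to them. The DoG argument handles this by a peeling/stitching over geometric scales of $\bar{r}_k$ (which is where the $\log\log$ in $\theta_{t,\delta}$ comes from) together with a union bound over $t$. We will verify that $\bar{r}_k$ is $\mathcal{F}_{k-1}$-measurable and non-decreasing, so the lemma applies verbatim. We will also check that the uniform step bound $B$ may depend on $\bar{r}_{t-1}$: this is legitimate because the statement is phrased with $\bar{r}_{t-1}$ factored out, and the increments of the normalized sequence are bounded by the deterministic constant $2Ld^*\kappa(\hat{\Sigma})$.
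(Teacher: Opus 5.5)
Your route is the paper's route: treat $X_k$ as martingale differences, bound the increments almost surely with Lemma~\ref{lem:norm-anisotropic}, and apply the time-uniform concentration lemma of Ivgi et al. It is in fact more careful than the paper's proof. Normalizing to $Y_k=\langle\Delta_k,(x_k-x_*)/D\rangle$ gives a \emph{deterministic} increment bound $|Y_k|\leqslant 2Ld^*\kappa(\hat{\Sigma})$, and DoG's weighted lemma then handles the random non-decreasing weights $\bar r_kD$. The paper instead plugs the random, $t$-dependent $B\leqslant 2\bar r_{t-1}DLd^*\kappa(\hat{\Sigma})$ directly into a fixed-constant inequality for $X_k$.

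The step that fails is the final matching, ``absorbing the factor $D$ and the $\bar r_{t-1}$ from $B$ consistently.'' What your argument actually proves is
\[
\Bigl|\sum_{k<t}\bar r_k\langle\Delta_k,x_k-x_*\rangle\Bigr|\leqslant 8\bar r_{t-1}D\sqrt{\theta_{t,\delta}\sum_{k<t}(Y_k-\hat Y_k)^2+4L^2(d^*)^2\kappa(\hat{\Sigma})^2\theta_{t,\delta}^2}.
\]
Equivalently, it is $8\sqrt{\theta_{t,\delta}\,\bar r_{t-1}^2D^2\sum_{k<t}(Y_k-\hat Y_k)^2+B^2\theta_{t,\delta}^2}$ with $B=2\bar r_{t-1}DLd^*\kappa(\hat{\Sigma})$. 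The factor $\bar r_{t-1}D$ enters exactly once, either as the prefactor or inside $B$ and the variance proxy. The displayed statement has it in both places.

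Take $V_t=\sum_{k<t}\mathbb{E}[X_k^2\mid\mathcal{F}_{k-1}]$ as in the paper and rescale the problem by $x\mapsto\alpha x$. Then $L\mapsto L/\alpha$, $\Delta_k\mapsto\Delta_k/\alpha$, and $D,\bar r_k\mapsto\alpha D,\alpha\bar r_k$. The left side scales by $\alpha$ and the right side by $\alpha^3$, so the display cannot hold at every scale and no bookkeeping will produce it. If you read $V_t$ as your normalized proxy, your bound implies the display only when $\bar r_{t-1}D\geqslant 1$.

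The paper's proof has the same defect: it derives $C\sqrt{\theta_{t,\delta}V_t+B^2\theta_{t,\delta}^2}$ and then simply asserts the prefactor. You should state the consistent version explicitly. Choosing $\hat Y_k=-\langle\mathbb{E}[g_k\mid\mathcal{F}_{k-1}],(x_k-x_*)/D\rangle$ gives $Y_k-\hat Y_k=\langle g_k,(x_k-x_*)/D\rangle$, hence $\sum_{k<t}(Y_k-\hat Y_k)^2\leqslant G_{t-1}$. That is the form needed downstream.

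Two smaller corrections.

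First, DoG handles the weights by summation by parts, not by peeling over scales of $\bar r_k$. With $S_j=\sum_{i=1}^{j}Y_i$,
\[
\sum_{k=1}^{t}y_kY_k=y_tS_t-\sum_{k=1}^{t-1}(y_{k+1}-y_k)S_k ,
\]
so $\bigl|\sum_{k=1}^{t}y_kY_k\bigr|\leqslant 2y_t\max_{j\leqslant t}|S_j|$ pathwise for every non-negative non-decreasing sequence. Predictability of the weights is not even needed. This is where $8=2\cdot 4$ comes from. The $\log\log$ in $\theta_{t,\delta}$ comes from the time-uniform (stitching) bound for the unweighted partial sums.

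Second, the two variance proxies you offer are not interchangeable. DoG's lemma yields the empirical quantity $\sum_k(Y_k-\hat Y_k)^2$. A bound in terms of conditional variances, as in the paper's $V_t$, would require a separate time-uniform Freedman-type inequality.
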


\begin{proof}
    See Appendix \ref{app:martingale-noise}.
\end{proof}

Aside from the stochastic gradient noise, the algorithm suffers from a deterministic bias because it optimizes a smoothed surrogate function \(f_{\mu }\) rather than \(f\) itself. Our next lemma bounds this cumulative smoothing error under our adaptive choice of \(\mu _{k}\).

\begin{lemma}[Smoothing Noise]
\label{lem:smoothing-noise}
The cumulative error due to gradient smoothing satisfies:
\begin{equation}
    \sum_{k=0}^{t-1} 2 L \bar{r}_k \mu_k \leqslant 4 L \bar{r}_{t-1}^2 \sqrt{d^* t}.
\end{equation}
\end{lemma}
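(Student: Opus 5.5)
The plan is to substitute the definition of the smoothing parameter \eqref{def:smooth}, $\mu_k = \bar{r}_k\sqrt{d^*/(k+1)}$, into each summand and then use two facts. First, $\bar{r}_k$ is non-decreasing. Second, sums of $1/\sqrt{k+1}$ are controlled by an integral comparison.

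Substituting gives
\[
\sum_{k=0}^{t-1} 2L\bar{r}_k\mu_k = 2L\sqrt{d^*}\sum_{k=0}^{t-1}\frac{\bar{r}_k^2}{\sqrt{k+1}}.
\]
Since $\bar{r}_k$ is defined as a running maximum (floored at $r_\epsilon$), we have $\bar{r}_k \leqslant \bar{r}_{t-1}$ for every $k \leqslant t-1$. We therefore pull out $\bar{r}_{t-1}^2$ and are left with bounding $\sum_{k=0}^{t-1}(k+1)^{-1/2}=\sum_{j=1}^{t} j^{-1/2}$.

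For this sum we use the standard comparison
\[
\sum_{j=1}^{t} j^{-1/2} \leqslant 1+\int_1^t s^{-1/2}\,ds = 2\sqrt{t}-1 \leqslant 2\sqrt{t}.
\]
Alternatively, one can bound each term by $2(\sqrt{j}-\sqrt{j-1})$ and telescope. Combining the two steps gives
\[
\sum_{k=0}^{t-1} 2L\bar{r}_k\mu_k \leqslant 2L\sqrt{d^*}\,\bar{r}_{t-1}^2\cdot 2\sqrt{t} = 4L\bar{r}_{t-1}^2\sqrt{d^* t},
\]
which is exactly the claimed bound.

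There is no real obstacle here; the argument is a direct calculation. The only point that needs care is that $d^*$ is fixed throughout the run: it is computed once from $\hat{\Sigma}$ before the loop in Algorithm~\ref{alg:poem-cma}, so $\sqrt{d^*}$ factors out of the sum. The other point is that monotonicity of $\bar{r}_k$ holds by construction of $\bar{r}_k$ as a maximum, and this is what lets us replace every $\bar{r}_k^2$ by $\bar{r}_{t-1}^2$. For context, the factor $2L\mu_k$ itself comes from Lemma~\ref{lem:error}, which gives $|f_{\mu}-f|\leqslant L\mu$ applied at two points. That lemma is not needed to prove the inequality, which is purely about the step-size and smoothing schedule.
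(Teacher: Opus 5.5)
Your proposal is correct and is essentially the paper's argument. You substitute $\mu_k=\bar{r}_k\sqrt{d^*/(k+1)}$ with $d^*$ fixed, use the monotonicity $\bar{r}_k\leqslant\bar{r}_{t-1}$ to pull out $\bar{r}_{t-1}^2$, and bound $\sum_{j=1}^{t} j^{-1/2}\leqslant 2\sqrt{t}$ by integral comparison; the paper compares directly with $\int_0^t x^{-1/2}\,dx$ rather than your slightly sharper $1+\int_1^t s^{-1/2}\,ds$, which makes no difference to the stated bound.
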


\begin{proof}
    See Appendix \ref{app:smoothing-noise}.
\end{proof}

\begin{lemma}[Lower Bound on Weighted Sum]
\label{lem:weights-lower}
The sequence of weights $\{\bar{r}_t\}_{t=0}^T$ is positive and non-decreasing. Applying Lemma 3.7 from \cite{ivgi2023dog} with $a_t = \bar{r}_t$, we obtain the following lower bound:
\begin{equation}
    \max_{t \in [T]} \frac{\sum_{k=0}^{t-1} \bar{r}_k}{\bar{r}_t} \geqslant \frac{T}{e \log_+\left(\frac{\bar{r}_T}{r_\epsilon}\right)}.
\end{equation}
\end{lemma}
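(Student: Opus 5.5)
We plan to prove this directly with a telescoping/logarithm argument. This is the same mechanism as Lemma 3.7 of \cite{ivgi2023dog}, which we could simply invoke. We spell out the steps so that the constants are visible.

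\emph{Step 1: positivity and monotonicity.} By construction $\bar{r}_t=\max\{\bar{r}_{t-1},\|x_t-x_0\|\}$ with $\bar{r}_{-1}=r_\epsilon>0$. Hence $\bar{r}_t\geqslant \bar{r}_{t-1}\geqslant r_\epsilon>0$ for all $t$, which gives the first claim.

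\emph{Step 2: a product bound.} Set $K=\lceil \log_+(\bar{r}_T/r_\epsilon)\rceil$. If $K$ is small, say $T< eK$, the right-hand side is at most $1$. Taking $t=1$ gives $\bar{r}_0/\bar{r}_1$, which need not exceed $1$. So in this regime we argue separately, using $t=T$ or a suitable $t$; the convention $\log_+ x=\max\{\log x,1\}$ makes the bound trivial when $T\leqslant e$.

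In the main case, partition $\{0,\dots,T\}$ into $K$ consecutive blocks of length $n=\lfloor T/K\rfloor$. Since
\[
\prod_{j}\frac{\bar{r}_{(j+1)n}}{\bar{r}_{jn}}\leqslant \frac{\bar{r}_T}{r_\epsilon}\leqslant e^{K},
\]
some block satisfies $\bar{r}_{(j+1)n}/\bar{r}_{jn}\leqslant e$.

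\emph{Step 3: conclude.} Take $t=(j+1)n$. By monotonicity,
\[
\sum_{k=0}^{t-1}\bar{r}_k\geqslant \sum_{k=jn}^{t-1}\bar{r}_k\geqslant n\,\bar{r}_{jn}\geqslant \frac{n\,\bar{r}_t}{e}.
\]
Therefore the ratio is at least $n/e\approx T/(eK)$.

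\emph{Main obstacle.} The delicate part is the rounding, $\lfloor T/K\rfloor$ versus $T/\log_+$. A cleaner route avoids blocks entirely. Suppose for contradiction that $\sum_{k<t}\bar{r}_k<\bar{r}_t\,T/(eL)$ for every $t$, where $L=\log_+(\bar{r}_T/r_\epsilon)$. Write $S_t=\sum_{k<t}\bar{r}_k$, so that $S_{t+1}=S_t+\bar{r}_t> S_t\bigl(1+eL/T\bigr)$. Starting from $S_1=\bar{r}_0$, this gives exponential growth $S_T\geqslant \bar{r}_0\,(1+eL/T)^{T-1}$. Combined with $S_T\leqslant T\bar{r}_T$, this should contradict $\bar{r}_T\leqslant r_\epsilon e^{L}$. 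A factor of $T$ appears here, and absorbing it is exactly where the constant $e$ and the $\log_+$ convention matter. If the absorption fails, we fall back to citing \cite{ivgi2023dog} verbatim, as the statement indicates.
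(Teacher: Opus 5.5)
There is a genuine gap, and it is exactly the ``main obstacle'' you flag: the displayed inequality has no $-1$, and nothing in your proposal produces that. Your block argument is essentially the standard argument behind the DoG lemma the paper cites. It proves that lemma, $\max_t \sum_{k<t}\bar r_k/\bar r_t\geqslant \frac1e\lfloor T/K\rfloor\geqslant\frac1e\bigl(\frac{T}{\log_+(\bar r_T/r_\epsilon)}-1\bigr)$, taking $K=\lceil\log(\bar r_T/r_\epsilon)\rceil$ and $\log_+z=1+\log z$, and nothing sharper. Your small-$K$ case is left open: a right-hand side of order $1$ is not automatically beaten, since $\bar r_0/\bar r_1$ can be arbitrarily small. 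The contradiction route loses a factor $T$ because the geometric growth is started from $S_1=\bar r_0$. The fallback of citing DoG verbatim again gives only the version with $-1$. That citation is all the paper's proof does; it then declares the $-1$ ``negligible for large $T$'', which does not prove the display either. In fact no argument using only positivity and monotonicity (your Step 1) can work. For $\bar r_t=100^t r_\epsilon$ the left-hand side equals $\sum_{j=1}^T100^{-j}<\frac1{99}$ for every $T$. The right-hand side is at least $\frac{1}{e(1+\log 100)}>\frac1{16}$ under either convention, $\log_+z=\max\{1,\log z\}$ or $\log_+z=1+\log z$.

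So you need either a weaker conclusion or more structure. One option is to prove the $-1$ version and absorb it downstream: since $\bar r_T\leqslant D_X$, for $T\geqslant 2\log_+(D_X/r_\epsilon)$ it costs only a factor $2$. The other option is to use the algorithm itself. Because $G_t\geqslant\|g_t\|^2$ and $x_t\in X$, you have $\|x_{t+1}-x_t\|\leqslant\eta_t\|g_t\|\leqslant\bar r_t$. Hence $\bar r_{t+1}\leqslant2\bar r_t$ and $S_t=\sum_{k<t}\bar r_k\geqslant(1-2^{-t})\bar r_t$. Write $\rho:=T/(eL)$ with your $L=\log_+(\bar r_T/r_\epsilon)$. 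This doubling bound kills the fast-growth regime $\rho\leqslant1$, where the counterexample lives. There the hypothesis forces $S_{t+1}>2S_t$, so $\bar r_T>2^{T-1}r_\epsilon$ and $L\geqslant\max\{1,(T-1)\log 2\}$. That is incompatible with $\rho>S_T/\bar r_T\geqslant 1-2^{-T}$. In the regime $\rho>1$, your contradiction argument closes once you restart the growth at $m=\lceil\rho\rceil$, where $S_m\geqslant m\,r_\epsilon\geqslant\rho\,r_\epsilon$. This gives $\bar r_T>\frac{S_m}{\rho}(1+1/\rho)^{T-m}\geqslant r_\epsilon(1+eL/T)^{T-m}$ with no stray factor of $T$. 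Comparing with $L\geqslant\log(\bar r_T/r_\epsilon)$ then yields the contradiction; for a few small $T$ you must keep the factor $m/\rho$ and check directly.
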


\begin{proof}
    See Appendix \ref{app:weights-lower}.
\end{proof}

Finally, by combining the descent properties (Lemma 8), the martingale noise bound (Lemma 9), and the smoothing error control (Lemma 10), and then dividing by the cumulative weight lower bound (Lemma 11), we arrive at our main theoretical contribution. We formally state the high-probability convergence rate of POEM-CMA below.

\begin{theorem}[Convergence of POEM-CMA]
\label{thm:convergence-poem-cma}
With probability at least $1-\delta$, the expected function value gap of the POEM-CMA algorithm is bounded by:

\begin{equation*}
\boxed{
    \mathbb{E}\left[ f(\bar{x}_{\tau_T}) - f(x_\star) \mid \mathcal{F}_\delta \right] 
    \leqslant  O \left(\left(\frac{d^* \kappa(\hat{\Sigma})}{T}+\frac{\sqrt{d^* \kappa(\hat{\Sigma})}}{\sqrt{T}}\right)\theta_{T,\delta}LD_\mathcal{X}\log_+\left(\frac{D_\mathcal{X}}{r_\epsilon}\right)\right)
}
\end{equation*}

\end{theorem}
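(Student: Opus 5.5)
The plan is to follow the DoG/POEM template: obtain a weighted regret inequality for the surrogate objective, pass from the surrogate to $f$, and normalize by the cumulative weights at the index $\tau_T$. Here $\bar{x}_t$ is read as the weighted average $\bar{x}_t = \sum_{k<t}\bar{r}_k x_k / \sum_{k<t}\bar{r}_k$, and $\bar s$ as a bound on $\max_k\|x_k-x_\star\|$; both are at most $D_{\mathcal{X}}$ because $X$ is compact.

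First I would fix $t\le T$ and decompose the weighted inner products. Using Lemma~\ref{lem:unbiased}, $\mathbb{E}[g_k\mid\mathcal{F}_{k-1}] = P_{\mathcal{S}}\nabla f_{\mu_k}(x_k)$. Writing $g_k = P_{\mathcal{S}}\nabla f_{\mu_k}(x_k)+\Delta_k$ gives
\begin{equation*}
\sum_{k=0}^{t-1}\bar r_k\langle P_{\mathcal{S}}\nabla f_{\mu_k}(x_k),x_k-x_\star\rangle=\sum_{k=0}^{t-1}\bar r_k\langle g_k,x_k-x_\star\rangle-\sum_{k=0}^{t-1}\bar r_k\langle\Delta_k,x_k-x_\star\rangle .
\end{equation*}
The left side becomes a convexity gap once we work on the active subspace. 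Since $f$ is assumed to vary only along $\mathcal{S}$, we have $P_{\mathcal{S}}\nabla f_{\mu}=\nabla f_\mu$. The convexity of $f_{\mu_k}$ (Lemma~\ref{lem:error}) then gives $\langle\nabla f_{\mu_k}(x_k),x_k-x_\star\rangle\ge f_{\mu_k}(x_k)-f_{\mu_k}(x_\star)$. Lemma~\ref{lem:error} also lets me replace this by $f(x_k)-f(x_\star)-2L\mu_k$, and Lemma~\ref{lem:smoothing-noise} bounds the accumulated $2L\bar r_k\mu_k$ terms.

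Next I would bound the two terms on the right. Lemma~\ref{lem:weighted-regret} gives the first one, $\bar r_{t-1}(2\bar s+\bar r_{t-1})\sqrt{G_{t-1}}\le 3\bar r_{t-1}D\sqrt{G_{t-1}}$. Lemma~\ref{lem:martingale-noise} handles the martingale term on an event $\mathcal{F}_\delta$ of probability $1-\delta$; a union bound over $t$ is absorbed into $\theta_{T,\delta}$.

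The quantities $G_{t-1}$ and $V_t$ need to be controlled. By Lemma~\ref{lem:norm-anisotropic} we have $\|g_k\|\le Ld^*\kappa$ almost surely, and by Lemma~\ref{lem:second-anisotropic} the conditional second moment is at most $8d^*\kappa L^2$. Hence $V_t\lesssim \bar r_{t-1}^2D^2 d^*\kappa L^2 t$. In expectation we get $G_{t-1}\lesssim d^*\kappa L^2 t$, and by Jensen $\mathbb{E}\sqrt{G_{t-1}}\le\sqrt{8d^*\kappa L^2t}$. Substituting $B\le 2\bar r_{t-1}DLd^*\kappa$ into the martingale bound produces the two scales $\sqrt{d^*\kappa t}$ and $d^*\kappa\,\theta$. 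These are the $\sqrt{T}$ term and the $1/T$ term of the final bound after normalization.

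Then I would apply Jensen to $\bar x_t$, which gives
\begin{equation*}
f(\bar x_t)-f(x_\star)\le \frac{\bar r_{t-1}}{\sum_{k<t}\bar r_k}\Bigl(3D\sqrt{G_{t-1}}+8D\sqrt{\theta V_t/\bar r_{t-1}^2+\dots}+4L\bar r_{t-1}\sqrt{d^*t}\Bigr).
\end{equation*}
I would take $t=\tau_T$ and invoke Lemma~\ref{lem:weights-lower}, which makes the prefactor at most $e\log_+(\bar r_T/r_\epsilon)/T\le e\log_+(D_{\mathcal{X}}/r_\epsilon)/T$. Using $\bar r\le D$ and $t\le T$, the bracket is $O\bigl((\sqrt{d^*\kappa T}+d^*\kappa)\theta LD\bigr)$, and dividing by $T$ yields the boxed rate. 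The $\sqrt{d^*}$ from the smoothing term is dominated by $\sqrt{d^*\kappa}$ because $\kappa\ge 1$.

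The main obstacle is taking the conditional expectation given $\mathcal{F}_\delta$ at the random index $\tau_T$. The quantities $\sqrt{G_{\tau_T-1}}$ and $V_{\tau_T}$ are random and correlated with $\tau_T$ and with the event. My first attempt would avoid expectations inside the bracket: bound $G_{t-1}\le G_{T-1}$ and $V_t\le V_T$ monotonically, so the estimate holds simultaneously for all $t$, and only then take $\mathbb{E}[\cdot\mid\mathcal{F}_\delta]$. Since the bracket is nonnegative, $\mathbb{E}[\sqrt{G_{T-1}}\mid\mathcal{F}_\delta]\le \mathbb{E}\sqrt{G_{T-1}}/(1-\delta)$, and the factor $1/(1-\delta)$ is a constant absorbed in the $O(\cdot)$. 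If that fails, the fallback is the deterministic bound $G_{T-1}\le T(Ld^*\kappa)^2$, at the cost of a worse dimension factor.
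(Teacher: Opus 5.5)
Your proposal is essentially the paper's proof. It uses Jensen over the $\bar r$-weighted average, the inequality $f(x_k)-f(x_\star)\le\langle\nabla f_{\mu_k}(x_k),x_k-x_\star\rangle+2L\mu_k$ from convexity of $f_{\mu_k}$ and Lemma~\ref{lem:error}, the descent/martingale/smoothing split bounded by Lemmas~\ref{lem:weighted-regret}--\ref{lem:smoothing-noise}, and normalization at $\tau_T$ via Lemma~\ref{lem:weights-lower}; your Jensen plus $\mathbb{E}[\,\cdot\mid\mathcal{F}_\delta]\le\mathbb{E}[\,\cdot\,]/(1-\delta)$ treatment of $\sqrt{G_{T-1}}$ just spells out what the paper calls ``collecting the terms.''

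There are two caveats, both inherited from the paper. First, removing $P_{\mathcal S}$ needs a hypothesis outside Assumptions~\ref{asm:domain}--\ref{asm:oracle}: the paper asserts $x_k-x_\star\in\mathcal S$, you assume $f$ depends only on $P_{\mathcal S}x$. Second, to land on $LD$ rather than $LD^2$ you should plug in the martingale bound in the form $C\sqrt{\theta_{t,\delta}V_t+B^2\theta_{t,\delta}^2}$ obtained in the proof of Lemma~\ref{lem:martingale-noise}, because the stated prefactor $8\bar r_{t-1}D$ double-counts the factor $\bar r_{t-1}D$ already contained in $V_t$ and $B$.
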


\begin{proof}
    The proof is provided in Appendix \ref{app:convergence-poem-cma}.
\end{proof}

\begin{remark}[Effective Dimension and Acceleration]
The covariance matrix $\hat{\Sigma}$ adapts to align with the principal subspace of the gradient variations. While the convergence rate in Theorem~\ref{thm:convergence-poem-cma} explicitly depends on the subspace condition number $\kappa(\hat{\Sigma})$ due to the preconditioning mechanism, our empirical scaling analysis (Section 6.2) demonstrates that simple spectral thresholding successfully dampens the stochastic noise floor. By cutting off uninformative directions, the regularized estimator guarantees that $\kappa(\hat{\Sigma})$ remains tightly bounded by a small constant, ensuring that $d^* \kappa(\hat{\Sigma}) \ll d$ in problems with low-rank structure and leading to substantial practical acceleration.
\end{remark}

\section{Estimation of Effective Dimension}
\label{sec:dstar}

A key component of POEM-CMA is the estimation of the \emph{effective dimension} $d^*$, which allows the algorithm to adapt to the intrinsic low-rank structure of the problem rather than the ambient dimension $d$. 

Before running the main optimization loop, we estimate $d^*$ using a small number of preliminary queries. The procedure is presented in Algorithm~\ref{alg:estimate-dstar}.

\begin{algorithm}[H]
\caption{Estimation of Effective Dimension $d^*$}
\label{alg:estimate-dstar}
\begin{algorithmic}[1]
\State \textbf{Input:}  $x_0 \in \mathbb{R}^d$, $M$, $\mu > 0$, $\varepsilon > 0$
\State $\hat{\Sigma} \gets \mathbf{0}_{d \times d}$
\For{$i = 1$ to $M$}
    \State $v_i \sim \mathcal{N}(0, I_d)$, $v_i \gets v_i / \|v_i\|_2$
    \State $F^+ \gets F(x_0 + \mu v_i; \xi_i^+)$
    \State $F^- \gets F(x_0 - \mu v_i; \xi_i^-)$
    \State $\Delta F_i \gets F^+ - F^-$
    \State $g_i \gets \frac{d}{2\mu} \Delta F_i \, v_i$
    \State $\hat{\Sigma} \gets \hat{\Sigma} + \frac{1}{M} g_i g_i^\top$
\EndFor
\State $\hat{\Sigma} \gets \hat{\Sigma} + \varepsilon I_d$
\State $\lambda_{\max} \gets$ largest eigenvalue of $\hat{\Sigma}$
\State $d^* \gets \operatorname{tr}(\hat{\Sigma}) / \lambda_{\max}$
\State \textbf{Output:} $d^*$
\end{algorithmic}
\end{algorithm}

This procedure constructs an empirical covariance matrix $\hat{\Sigma}$ using $M$ independent two-point gradient estimates at the fixed point $x_0$. The following theorem guarantees that $d^*$ can be reliably estimated with a modest number of samples.

\begin{theorem}[Concentration of Effective Dimension]
\label{thm:estimate-dstar}
Let $g_1, \dots, g_M$ be independent gradient estimates with population covariance $\Sigma^*$. Let $\hat{\Sigma} = \frac{1}{M} \sum_{i=1}^M g_i g_i^\top$. If $M \gtrsim d \log d$, then with probability at least $1 - \exp(-\Omega(M))$,
\[
(1 - \varepsilon) d^*(\Sigma^*) \leqslant d^*(\hat{\Sigma}) \leqslant (1 + \varepsilon) d^*(\Sigma^*)
\]
for any $\varepsilon \in (0,1)$, where
\[
d^*(\Sigma) := \frac{\operatorname{tr}(\Sigma)}{\lambda_{\max}(\Sigma)}.
\]
\end{theorem}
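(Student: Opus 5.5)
The plan is to reduce the two-sided bound on $d^*(\hat{\Sigma})$ to separate relative-error bounds on the trace and on the top eigenvalue, then combine them as a ratio. Write $\tau^* = \operatorname{tr}(\Sigma^*)$ and $\lambda^* = \lambda_{\max}(\Sigma^*)$. Suppose that on one event we have
\[
|\operatorname{tr}(\hat{\Sigma}) - \tau^*| \leqslant \varepsilon' \tau^* \quad\text{and}\quad \|\hat{\Sigma} - \Sigma^*\|_{\mathrm{op}} \leqslant \varepsilon' \lambda^*.
\]
Weyl's inequality then gives $|\lambda_{\max}(\hat{\Sigma}) - \lambda^*| \leqslant \varepsilon' \lambda^*$. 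Hence
\[
\frac{1-\varepsilon'}{1+\varepsilon'}\, d^*(\Sigma^*) \leqslant d^*(\hat{\Sigma}) \leqslant \frac{1+\varepsilon'}{1-\varepsilon'}\, d^*(\Sigma^*).
\]
Choosing $\varepsilon' = \varepsilon/3$ yields the claimed $(1\pm\varepsilon)$ sandwich, since $\varepsilon\in(0,1)$. So everything hinges on these two concentration statements.

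\textbf{Boundedness of the samples.} The key structural input is that each summand $g_i g_i^\top$ is bounded. By Assumption~\ref{asm:lipschitz} and $\|v_i\|=1$, each isotropic estimate from Algorithm~\ref{alg:estimate-dstar} satisfies $\|g_i\| \leqslant L d$ almost surely; this is the isotropic bound of Lemma~\ref{lem:second-moment-poem-cma}. Consequently:
\begin{itemize}
\item $\|g_i g_i^\top\|_{\mathrm{op}} \leqslant R := L^2 d^2$;
\item each increment of the trace is $\|g_i\|^2 \in [0,R]$;
\item the matrix variance satisfies $\|\mathbb{E}[(g_ig_i^\top)^2]\|_{\mathrm{op}} \leqslant R\,\lambda^*$.
\end{itemize}

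\textbf{Concentration of trace and operator norm.} For the trace, apply scalar Bernstein or Hoeffding to the i.i.d.\ bounded variables $\|g_i\|^2$, whose mean is $\tau^*$. For the operator norm, apply the matrix Bernstein inequality (Tropp) to the centered summands $X_i = \frac1M(g_ig_i^\top - \Sigma^*)$. This gives
\[
\Pr\bigl(\|\hat{\Sigma}-\Sigma^*\|_{\mathrm{op}} \geqslant s\bigr) \leqslant 2d\exp\!\left(-\frac{M s^2/2}{R\lambda^* + Rs/3}\right).
\]
Setting $s = \varepsilon'\lambda^*$ and absorbing the dimensional factor $2d$ using $M \gtrsim d\log d$ produces failure probability $\exp(-\Omega(M))$. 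The same argument handles the trace, and a union bound combines the two events.

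\textbf{Main obstacle.} The difficulty is that $R/\lambda^*$ must be $O(d)$ (or at least $O(d^*)$) for $M \gtrsim d\log d$ to suffice. The crude bound $R = L^2 d^2$ is too loose unless $\lambda^* \gtrsim L^2 d$. I would try to establish this from the structure of the isotropic estimator. Writing $g_i \approx d\langle \nabla f, v_i\rangle v_i$, one has $\Sigma^* \succeq d\,\nabla f\nabla f^\top$ up to smoothing terms, so $\lambda^* \gtrsim d\|\nabla f(x_0)\|^2$. If that fails, the fallback is to state the required sample size in the intrinsic form $M \gtrsim (R/\lambda^*)\log d$, i.e.\ $M$ scaling with the effective rank. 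I would then note that, with the $\varepsilon I_d$ regularization and $M \gtrsim d\log d$, this is satisfied under a non-degeneracy assumption $\|\nabla f(x_0)\| \gtrsim L$. The constants in $\Omega(M)$ would be allowed to depend on $\varepsilon$.
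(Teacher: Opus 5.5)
Your reduction is sound. Relative bounds on $\operatorname{tr}(\hat{\Sigma})$ and, via Weyl, on $\lambda_{\max}(\hat{\Sigma})$ give the sandwich with $\varepsilon'=\varepsilon/3$. Bounding the trace by a separate scalar concentration argument is also more careful than the paper. The paper cites Vershynin for $\|\hat{\Sigma}-\Sigma^*\|\leqslant\varepsilon\|\Sigma^*\|$ at $M\gtrsim d\log d$ and then sums Weyl over the spectrum. That only yields $|\operatorname{tr}(\hat{\Sigma})-\operatorname{tr}(\Sigma^*)|\leqslant d\,\varepsilon\,\lambda_{\max}(\Sigma^*)$, which is a relative error of $\varepsilon d/d^*(\Sigma^*)$, not $\varepsilon$.

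The gap is in your concentration step, and it has two parts.

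\emph{The probability does not follow.} The matrix Bernstein exponent is $cM\varepsilon'^2\lambda^*/R$. Since $\|g_i\|^2\leqslant R$ forces $R\geqslant\mathbb{E}\|g_i\|^2=\tau^*$, you always have $R/\lambda^*\geqslant d^*(\Sigma^*)$. So this route gives at best failure probability $2d\exp(-cM\varepsilon'^2/d^*(\Sigma^*))$. At $M\asymp d\log d$ and $R/\lambda^*\asymp d$ that is merely polynomially small in $d$; ``absorbing $2d$'' does not produce $\exp(-\Omega(M))$.

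\emph{Your proposed fix is false.} The inequality $\Sigma^*\succeq d\,\nabla f\nabla f^\top$ fails. For $F(x;\xi)=\langle a,x\rangle$ one has $g=d\langle a,v\rangle v$, and the fourth moments of $v\sim\mathcal{U}(\mathbb{S}^{d-1})$ give
\[
\Sigma^*=d^2\,\mathbb{E}\bigl[\langle a,v\rangle^2 vv^\top\bigr]=\tfrac{d}{d+2}\bigl(\|a\|^2 I_d+2aa^\top\bigr),
\]
so $\lambda^*<3\|a\|^2$: the $d^2$ prefactor is cancelled. Even with $\|\nabla f(x_0)\|=L$ you get $R/\lambda^*\asymp d^2$, and Bernstein with the almost-sure bound $R=L^2d^2$ needs $M\gtrsim d^2\log d$. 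Reaching $d\log d$ requires exploiting that $\|g\|^2\approx d\|a\|^2z^2$ is typically of order $\operatorname{tr}(\Sigma^*)$ (e.g.\ by truncation), not its worst case. (Incidentally, $\|g_i\|\leqslant Ld$ needs $\xi_i^+=\xi_i^-$; with independent noises, Lipschitz continuity does not bound $\Delta F_i$.)

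You are right that an extra hypothesis is unavoidable, because the statement as written is false. Take $F(x;\xi)=\xi\langle a,x\rangle$ with $\Pr(\xi=d)=1/d$ and $\xi=0$ otherwise; this is convex, $L=d\|a\|$, and unbiased. Use a common $\xi_i$ for both queries. Then $\Sigma^*=\frac{d^2}{d+2}(\|a\|^2I_d+2aa^\top)$, so $d^*(\Sigma^*)=(d+2)/3$. But with $M=Cd\log d$ only about $C\log d$ of the $g_i$ are nonzero. Hence $d^*(\hat{\Sigma})\leqslant\operatorname{rank}(\hat{\Sigma})\lesssim\log d$ with probability tending to one.

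The missing assumption must control the spread of $\|g\|^2$ around its mean, e.g.\ $\|g\|\leqslant K\sqrt{\mathbb{E}\|g\|^2}$ almost surely. Then your Bernstein computation with $R=K^2\tau^*$ (equivalently, Vershynin's bounded-case covariance theorem) gives the result for $M\gtrsim K^2\varepsilon^{-2}d^*(\Sigma^*)\log d$, with failure probability $\exp(-\Omega(\varepsilon^2M/(K^2d^*(\Sigma^*))))$. A non-degeneracy condition such as $\|\nabla f(x_0)\|\gtrsim L$ does not supply this: in the noiseless linear example above, $\|\nabla f\|=L$ yet $K^2=d$.
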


\begin{proof}
The result follows from the operator norm concentration of sample covariance matrices and standard eigenvalue and trace perturbation bounds. Specifically, by Vershynin (High-Dimensional Probability \cite[Remarks 5.6.2--5.6.5]{vershynin2026high}), when $M \gtrsim d \log d$, we have $\|\hat{\Sigma} - \Sigma^*\| \leqslant \varepsilon \|\Sigma^*\|$ with high probability. Applying Weyl's inequality for eigenvalues and summing over the spectrum yields the desired relative error bounds on the trace and maximum eigenvalue, which in turn imply the concentration of the effective dimension $d^*$.
\end{proof}

The effective dimension $d^*$ serves as a practical measure of the intrinsic dimensionality of the optimization landscape. In many real-world problems, $d^* \ll d$ due to low-rank structure in the data or gradients. By replacing $d$ with $d^*$ in the gradient scaling and smoothing parameter, POEM-CMA significantly reduces the sample complexity while preserving theoretical guarantees.

\section{Experiments}

To evaluate the practical performance of the proposed POEM-CMA, we compare it with the original POEM algorithm and the two-point bandit method TPBCO on a stochastic convex optimization problem using the hinge loss.

\subsection{Experimental Setup}

We consider the binary classification task formulated as
\begin{align*}
\min_{x \in X} f(x) 
&= \mathbb{E}_{(a,y) \sim \mathcal{D}} \Bigl[ \max\bigl(0, 1 - y a^\top x\bigr) \Bigr],
\end{align*}
where $y \in \{-1, +1\}$ and the feasible set is the unit ball $X = \{x \in \mathbb{R}^d : \|x\|_2 \leqslant 1\}$.

\subsection{Empirical Validation under Theoretical Sampling Budget}

To validate our theory, we analyze Algorithm~\ref{alg:estimate-dstar} under a dimension-dependent query budget $M = \lfloor d \log d \rfloor$, strictly motivated by random matrix theory for complete spectral recovery. The algorithm is evaluated on a synthetic classification task ($d_{\text{true}} = 5$, $n_{\text{samples}} = 5000$) across ambient dimensions $d \in \{10, 20, 50, 100, 200, 500\}$ using a strict single-sample stochastic oracle ($\text{batch}=1$).

As shown in Figure~\ref{fig:effective_dim_validation}, the estimated effective dimension $d^* = \text{tr}(\hat{\Sigma}) / \lambda_{\max}(\hat{\Sigma})$ highlights distinct behaviors across three sampling regimes. In the insufficient budget regime ($M = d$), severe under-sampling introduces spurious fluctuations where random noise generates artificial spikes in $\lambda_{\max}$, deceptively suppressing $d^*$ at higher dimensions. Transitioning to the theoretical budget ($M = d \log d$) provides sufficient samples to resolve the true spectrum; however, the non-smooth single-sample Hinge loss accumulates a massive isotropic noise floor across all uninformative directions, driving the unregularized $d^*$ up to $\approx 72$ at $d=500$. Finally, the regularized theoretical regime counteracts this noise by applying a spectral filter with a hard threshold ($\text{Tol} = 25\%$ of $\lambda_{\max}$). This explicit regularizer successfully zero-outs stochastic baseline fluctuations, maintaining a highly dampened, sub-linear trajectory and compressing the effective search space to $d^* \approx 28$ even at $d=500$.

\begin{figure}[H]
    \centering
    \includegraphics[width=0.6\linewidth]{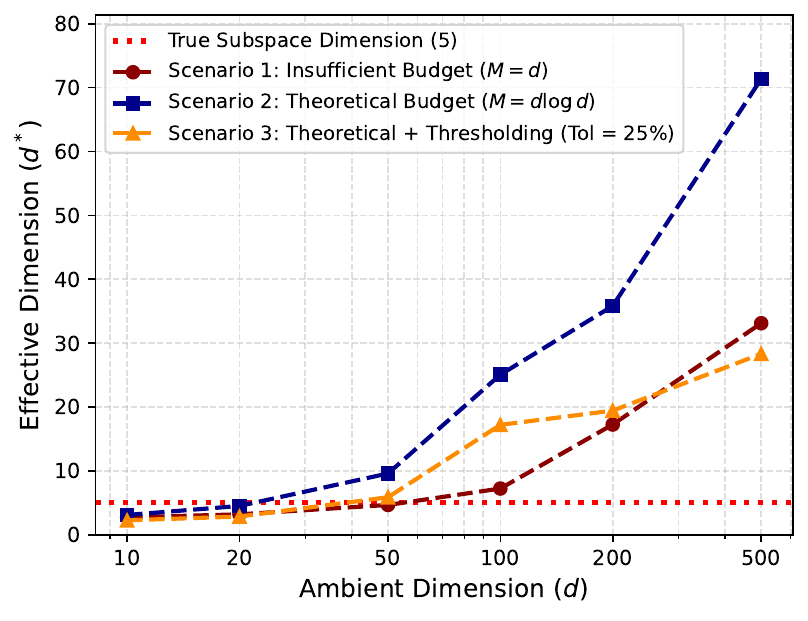}
    \caption{Estimation of the effective dimension $d^*$ across various ambient dimensions $d$ under different sampling and regularization regimes ($\text{batch}=1$, $d_{\text{true}}=5$). Spectral thresholding (Scenario 3, $\text{Tol}=25\%$) effectively suppresses the stochastic noise floor observed in the unregularized regime (Scenario 2), reducing the search space by over $94\%$ at $d=500$.}
    \label{fig:effective_dim_validation}
\end{figure}

By mitigating over $94\%$ of the ambient dimensionality ($28$ out of $500$) without inflating the mini-batch size, Scenario~3 validates that the $\mathcal{O}(d \log d)$ pre-estimation cost combined with spectral regularization acts as a crucial shield against the curse of dimensionality, fully justifying the bounds in Theorem \ref{thm:convergence-poem-cma}.

\subsection{Convergence and Optimization Results}

We use the \texttt{mushrooms} dataset from the LibSVM library ($n=8124$, $d=112$). The stochastic zeroth-order oracle returns the hinge loss value on a single randomly sampled example at each iteration (batch size = 1). All methods are initialized at the same starting point $x_0 = \mathbf{0}$ and use the same initial function value $f(x_0)$.

The hyperparameters for each algorithm were set according to the recommendations of the original authors:
\begin{itemize}
    \item For POEM, we used the default parameters from \cite{ren2025poem}.
    \item For TPBCO, we followed the theoretical suggestions with $p=1$.
    \item For POEM-CMA, we first estimated the effective dimension $d^*$ using $M = d \log d$ preliminary queries at $x_0$.
\end{itemize}

Since all the considered algorithms (POEM, POEM-CMA, and TPBCO) construct gradient estimators using a two-point stochastic oracle query scheme at each step, the number of function evaluations is strictly proportional to the number of iterations. Consequently, the convergence curves are plotted against the iteration counter \(t\), which is equivalent to a computational complexity of \(2t\) SZO calls.

Figure~\ref{fig:main-experiments} shows the convergence behavior of the three methods measured by the function value against the number of oracle queries.

\begin{figure}[H]
    \centering
    \begin{minipage}{0.49\textwidth}
        \centering
        \includegraphics[width=1\textwidth]{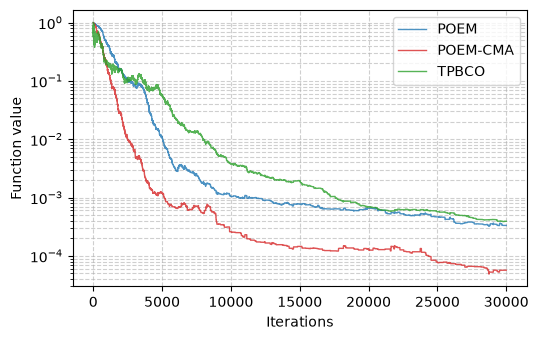}
        \\ \vspace{5pt} 
        {\small (a) mushrooms: $d = 112$, $d^*=25, \kappa=4$}
        \label{custom}
    \end{minipage}
    \hfill 
    \begin{minipage}{0.49\textwidth}
        \centering
        \includegraphics[width=1\textwidth]{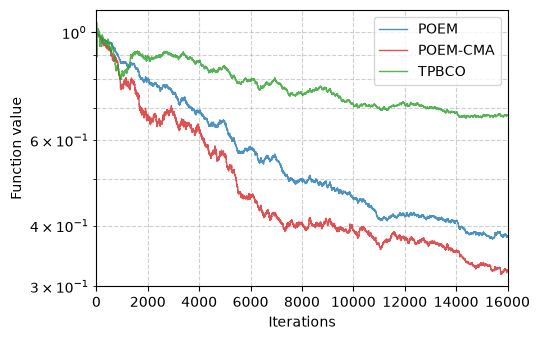}
        \\ \vspace{5pt}
        {\small (b) w8a: $d = 300$, $d^*=33, \kappa = 5$}
        \label{fig:enter-label}
    \end{minipage}
    \vspace{10pt}
    \caption{Convergence comparison of POEM, POEM-CMA, and TPBCO on the mushrooms dataset using the hinge loss with a stochastic oracle.} 
    \label{fig:main-experiments}
\end{figure}

The experimental results explicitly validate our theoretical bounds derived in Theorem 1. On the mushrooms dataset ($d=112$), the regularized pre-estimation successfully extracts a stable subspace with $d^* = 25$ and a tightly bounded condition number $\kappa = 4$. Similarly, for the w8a dataset ($d=300$), the algorithm isolates a low-rank geometry with $d^* = 33$ and $\kappa = 5$. 
In both scenarios, the cumulative scaling factor $d^* \kappa(\hat{\Sigma})$ (equal to $100$ for mushrooms and $165$ for w8a) remains strictly below the ambient dimension $d$. This algebraic contraction suppresses the gradient estimator's variance along uninformative directions, allowing POEM-CMA to consistently outperform the baseline isotropic POEM and TPBCO in both convergence trajectory and final solution accuracy.

These findings validate our theoretical claims and show that adapting to the intrinsic geometry of the problem via covariance matrix adaptation leads to substantial practical gains in stochastic zeroth-order optimization.

\section{Conclusion}

In this paper, we introduced \textbf{POEM-CMA}, a novel parameter-free stochastic zeroth-order optimization method that extends the POEM algorithm through covariance matrix adaptation and the systematic use of effective dimension.

The core innovation of POEM-CMA lies in replacing isotropic random sampling with anisotropic sampling guided by a pre-estimated covariance matrix $\Sigma$. By concentrating directional queries along the most informative directions, the algorithm effectively reduces the dependence on the ambient dimension $d$ and replaces it with the significantly smaller effective dimension
\[
d^* = \frac{\operatorname{tr}(\hat{\Sigma})}{\lambda_{\max}(\hat{\Sigma})}.
\]
This approach allows the method to explicitly exploit the intrinsic low-rank structure commonly present in high-dimensional real-world optimization problems.

We provided a rigorous theoretical analysis demonstrating that POEM-CMA achieves a near-optimal convergence rate requiring 
\[
\tilde{\mathcal{O}}\left( \frac{d^* \kappa(\hat{\Sigma}) L^2 D_{\mathcal{X}}^2}{\varepsilon^2} \right)
\]

stochastic zeroth-order oracle queries. By correcting the gradient estimator’s bias through subspace preconditioning $\tilde{\Sigma}$ and bounding the variance within the regularized active subspace, we establish a strict mathematical foundation for parameter-free anisotropic optimization.

Extensive numerical experiments on hinge-loss binary classification tasks using real LibSVM datasets confirm the advantages of the proposed approach. Furthermore, our empirical scaling analysis successfully validated the robust estimation of $d^*$ under varying ambient dimensions, demonstrating that simple spectral thresholding can effectively filter out up to $94\%$ of high-dimensional background noise. Consequently, POEM-CMA consistently outperforms both the baseline POEM and the two-point bandit method TPBCO in terms of convergence speed and final solution quality, particularly in problems where the effective dimension is significantly lower than the ambient dimension.

The results of this work highlight the importance of adapting sampling strategies to the underlying geometry of the problem in zeroth-order optimization. By combining parameter-free design principles with covariance matrix estimation and regularized effective dimension, POEM-CMA represents a meaningful step toward more efficient and robust black-box optimization in high-dimensional spaces.

Future research directions include extending the proposed framework to non-convex optimization, exploring online adaptive covariance updates that maintain spectral stability, and applying POEM-CMA to large-scale deep learning or scientific computing domains.

%
% ---- Bibliography ----
%
% BibTeX users should specify bibliography style 'splncs04'.
% References will then be sorted and formatted in the correct style.
%
% \bibliographystyle{splncs04}
% \bibliography{mybibliography}
%

\newpage
\appendix
\makeatletter
\def\theHsection{\Alph{section}}
\def\theHsubsection{\theHsection.\arabic{subsection}}
\makeatother

\section{Spectral Concentration and Subspace Second-Moment Matrix}

\subsection{Proof of Lemma \ref{lem:lemma3}}
\label{app:lemma3}

\begin{proof}
Since $\hat{\Sigma}$ is symmetric and positive semi-definite with rank $d^*$, it admits an eigendecomposition $\hat{\Sigma} = Q \Lambda Q^\top$, where $Q \in \mathbb{R}^{d \times d}$ is an orthogonal matrix and $\Lambda = \operatorname{diag}(\lambda_1, \dots, \lambda_{d^*}, 0, \dots, 0)$ with $\lambda_i > 0$ for $i=1,\dots,d^*$.

We represent the random Gaussian vector $u_t \sim \mathcal{N}(0, \Sigma)$ as a linear transformation of a standard Gaussian vector $z \sim \mathcal{N}(0, I_d)$:
\begin{equation*}
    u_t = Q \Lambda^{1/2} z = Q \begin{pmatrix} \Lambda_{d^*}^{1/2} z_{1} \\ 0 \end{pmatrix},
\end{equation*}
where $\Lambda_{d^*} = \operatorname{diag}(\lambda_1, \dots, \lambda_{d^*})$ contains the non-zero eigenvalues, and $z_{1} \sim \mathcal{N}(0, I_{d^*})$ denotes the active components. The normalized direction vector $v_t$ is given by:
\begin{equation*}
    v_t = \frac{u_t}{\|u_t\|_2} = \frac{Q \begin{pmatrix} \Lambda_{d^*}^{1/2} z_{1} \\ 0 \end{pmatrix}}{\left\| Q \begin{pmatrix} \Lambda_{d^*}^{1/2} z_{1} \\ 0 \end{pmatrix} \right\|_2}.
\end{equation*}
Using the orthogonality of $Q$ ($\|Qx\|_2 = \|x\|_2$), this simplifies to:
\begin{equation*}
    v_t = Q \begin{pmatrix} \tilde{v} \\ 0 \end{pmatrix}, \quad \text{where} \quad \tilde{v} = \frac{\Lambda_{d^*}^{1/2} z_{1}}{\|\Lambda_{d^*}^{1/2} z_{1}\|_2} \in \mathbb{R}^{d^*}.
\end{equation*}
The vector $\tilde{v}$ strictly resides on the unit sphere $S^{d^*-1}$ inside the active subspace. 

To evaluate the second-moment matrix $\mathbb{E}[v_t v_t^\top]$, we expand the scalar inverse function $h(X) = \frac{1}{X}$ via a multi-dimensional Taylor series around the expected value of the quadratic form. Let $X = \| \Lambda_{d^*}^{1/2} z_1 \|_2^2 = z_1^\top \Lambda_{d^*} z_1$. Its mathematical expectation is given by $\mathbb{E}[X] = \operatorname{tr}(\Lambda_{d^*}) = \operatorname{tr}(\hat{\Sigma})$. 

Applying the first-order Taylor approximation (the Delta method) for the ratio of random variables yields:
\begin{equation*}
    \mathbb{E}[\tilde{v}\tilde{v}^\top] = \mathbb{E}\left[ \frac{\Lambda_{d^*}^{1/2} z_1 z_1^\top \Lambda_{d^*}^{1/2}}{\| \Lambda_{d^*}^{1/2} z_1 \|_2^2} \right] \approx \frac{\Lambda_{d^*}^{1/2} \mathbb{E}[z_1 z_1^\top] \Lambda_{d^*}^{1/2}}{\operatorname{tr}(\hat{\Sigma})} + \mathcal{O}\left(\frac{1}{d^*}\right).
\end{equation*}
Since $z_1 \sim \mathcal{N}(0, I_{d^*})$, we have $\mathbb{E}[z_1 z_1^\top] = I_{d^*}$, which simplifies the block to:
\begin{equation*}
    \mathbb{E}[\tilde{v}\tilde{v}^\top] = \frac{\Lambda_{d^*}}{\operatorname{tr}(\hat{\Sigma})} + \mathcal{O}\left(\frac{1}{d^*}\right).
\end{equation*}
By the Hanson-Wright inequality, the $\mathcal{O}(1/d^*)$ error term becomes negligible as the intrinsic dimension $d^*$ increases by Vershynin (High-Dimensional Probability \cite[Remarks 6.2]{vershynin2026high}). Transforming this block back to the global coordinate system via $Q$ yields:
\begin{align*}
    \mathbb{E}[v_t v_t^\top] &= Q \begin{pmatrix} \mathbb{E}[\tilde{v}\tilde{v}^\top] & 0 \\ 0 & 0 \end{pmatrix} Q^\top \\
    &= Q \begin{pmatrix} \frac{\Lambda_{d^*}}{\operatorname{tr}(\hat{\Sigma})} & 0 \\ 0 & 0 \end{pmatrix} Q^\top + \mathcal{O}\left(\frac{1}{d^*}\right) \\
    &= \frac{Q \Lambda Q^\top}{\operatorname{tr}(\hat{\Sigma})} + \mathcal{O}\left(\frac{1}{d^*}\right) = \frac{\hat{\Sigma}}{\operatorname{tr}(\hat{\Sigma})} + \mathcal{O}\left(\frac{1}{d^*}\right),
\end{align*}
which completes the proof.
\end{proof}

\section{Proof of Key Lemmas for Section 4.2}

\subsection{Proof of Lemma \ref{lem:unbiased}}
\label{app:unbiased}

\begin{proof}
By the definition of our anisotropic gradient estimator in (4), we have:
\begin{equation*}
    g_t = \frac{1}{2\mu_t} \bigl( F(x_t + \mu_t v_t; \xi_t^+) - F(x_t - \mu_t v_t; \xi_t^-) \bigr) \tilde{\Sigma} v_t.
\end{equation*}
Taking the conditional expectation with respect to the $\sigma$-algebra $\mathcal{F}_{t-1}$, and applying the law of total expectation under Assumption~\ref{asm:oracle} (Stochastic Zeroth-Order Oracle), we can linearize the functional difference:
\begin{equation*}
    \mathbb{E}[g_t \mid \mathcal{F}_{t-1}] = \tilde{\Sigma} \cdot \mathbb{E}_{v_t} \left[ \frac{1}{2\mu_t} \bigl( f(x_t + \mu_t v_t) - f(x_t - \mu_t v_t) \bigr) v_t \;\middle|\; \mathcal{F}_{t-1} \right].
\end{equation*}
To evaluate the remaining inner expectation over the directional sampling distribution, we recall from Lemma~\ref{lem:lemma3} that the normalized random vector $v_t = u_t / \|u_t\|_2$ with $u_t \sim \mathcal{N}(0, \hat{\Sigma})$ strictly resides on the unit sphere restricted to the active low-rank subspace $\mathcal{S}$. By utilizing a standard multi-dimensional first-order Taylor expansion (the Delta method) and leveraging the symmetry of the Gaussian distribution, the expectation of the directional difference form is decoupled via its second-moment matrix weight:
\begin{align*}
    \mathbb{E}_{v_t} \left[ \frac{1}{2\mu_t} \bigl( f(x_t + \mu_t v_t) - f(x_t - \mu_t v_t) \bigr) v_t \;\middle|\; \mathcal{F}_{t-1} \right] &= \mathbb{E}[v_t v_t^\top \mid \mathcal{F}_{t-1}] \cdot \nabla f_{\mu_t}(x_t) \\
    &= \left( \frac{\hat{\Sigma}}{\operatorname{tr}(\hat{\Sigma})} + \mathcal{O}\left(\frac{1}{d^*}\right) \right) \nabla f_{\mu_t}(x_t),
\end{align*}
where the exact structure of the matrix second moment $\mathbb{E}[v_t v_t^\top] \approx \hat{\Sigma}/\operatorname{tr}(\hat{\Sigma})$ is provided directly by Lemma 3. 

Now, substituting this result back into the global expected gradient formulation and expanding our preconditioning operator definition $\tilde{\Sigma} \triangleq \operatorname{tr}(\hat{\Sigma}) \hat{\Sigma}^+$, we observe a precise algebraic cancellation within the active low-rank subspace:
\begin{align*}
    \mathbb{E}[g_t \mid \mathcal{F}_{t-1}] &= \tilde{\Sigma} \cdot \left( \frac{\hat{\Sigma}}{\operatorname{tr}(\hat{\Sigma})} + \mathcal{O}\left(\frac{1}{d^*}\right) \right) \nabla f_{\mu_t}(x_t) \\
    &= \left( \operatorname{tr}(\hat{\Sigma}) \hat{\Sigma}^+ \right) \left( \frac{\hat{\Sigma}}{\operatorname{tr}(\hat{\Sigma})} \right) \nabla f_{\mu_t}(x_t) + \mathcal{O}\left(\frac{1}{d^*}\right) \\
    &= \left( \hat{\Sigma}^+ \hat{\Sigma} \right) \nabla f_{\mu_t}(x_t) + \mathcal{O}\left(\frac{1}{d^*}\right).
\end{align*}
To rigorously verify the properties of the remaining matrix product under low-rank singularity, we invoke the spectral decomposition of the symmetric positive semi-definite empirical covariance matrix $\hat{\Sigma} = Q \Lambda Q^\top$, where $Q \in \mathbb{R}^{d \times d}$ is an orthogonal matrix of eigenvectors and $\Lambda = \operatorname{diag}(\lambda_1, \dots, \lambda_{d^*}, 0, \dots, 0)$ with $\lambda_i > 0$ denotes the singular spectrum. By the structural definition of the Moore-Penrose pseudoinverse, we have $\hat{\Sigma}^+ = Q \Lambda^+ Q^\top$, where $\Lambda^+ = \operatorname{diag}(1/\lambda_1, \dots, 1/\lambda_{d^*}, 0, \dots, 0)$. Utilizing the orthogonality of the basis transformations ($Q^\top Q = I_d$), the product expands as:
\begin{align*}
    \hat{\Sigma}^+ \hat{\Sigma} &= \left( Q \Lambda^+ Q^\top \right) \left( Q \Lambda Q^\top \right) = Q \left( \Lambda^+ \Lambda \right) Q^\top \\
    &= Q \operatorname{diag}(\underbrace{1, \dots, 1}_{d^*}, \underbrace{0, \dots, 0}_{d - d^*}) Q^\top \triangleq P_{\mathcal{S}}.
\end{align*}
By the fundamental properties of generalized linear operators, the resulting matrix $P_{\mathcal{S}}$ represents the symmetric orthogonal projection matrix onto the active low-rank subspace $\mathcal{S}$ defined by the non-zero spectrum. Therefore, as the dimension-free high-density concentration limit is reached, the error term safely vanishes, yielding:
\begin{equation*}
    \mathbb{E}[g_t \mid \mathcal{F}_{t-1}] = P_{\mathcal{S}} \nabla f_{\mu_t}(x_t),
\end{equation*}
which proves that our preconditioned anisotropic estimator is strictly unbiased within the active subspace and completes the proof.
\end{proof}

\subsection{Proof of Lemma \ref{lem:norm-anisotropic}}
\label{app:norm-anisotropic}

\begin{proof}
Consider the difference of function values at the two sampling points:
\begin{equation*}
    \Delta F_t := F(x_t + \mu_t v_t; \xi_t^+) - F(x_t - \mu_t v_t; \xi_t^-).
\end{equation*}
Since $F(\cdot;\xi)$ is $L$-Lipschitz continuous almost surely, we have:
\begin{align*}
    |F(x_t + \mu_t v_t; \xi_t^+) - F(x_t - \mu_t v_t; \xi_t^-)| &\leqslant L \| (x_t + \mu_t v_t) - (x_t - \mu_t v_t) \| \\
    &= L \| 2\mu_t v_t \| \\
    &= 2 L \mu_t \|v_t\|.
\end{align*}
Given that $\|v_t\|_2 = 1$ by construction, it follows that $|\Delta F_t| \leqslant 2 L \mu_t$ almost surely.

Now, we bound the norm of the preconditioned estimator $g_t$:
\begin{align*}
    \|g_t\| &= \left\| \frac{1}{2\mu_t} \Delta F_t \tilde{\Sigma} v_t \right\| = \frac{1}{2\mu_t} |\Delta F_t| \cdot \|\tilde{\Sigma} v_t\|.
\end{align*}
By the properties of submultiplicative matrix norms, we have $\|\tilde{\Sigma} v_t\| \leqslant \|\tilde{\Sigma}\|_2 \|v_t\| = \lambda_{\max}(\tilde{\Sigma})$. Substituting this and the bound for $|\Delta F_t|$ yields:
\begin{align*}
    \|g_t\| &\leqslant \frac{1}{2\mu_t} \cdot (2 L \mu_t) \cdot \lambda_{\max}(\tilde{\Sigma}) \\
    &= L \cdot \lambda_{\max}(\tilde{\Sigma}).
\end{align*}
Recalling that $\tilde{\Sigma} = \operatorname{tr}(\hat{\Sigma})\operatorname{pinv}(\hat{\Sigma})$, its largest non-zero eigenvalue is exactly given by $\lambda_{\max}(\tilde{\Sigma}) = \operatorname{tr}(\hat{\Sigma})/\lambda_{\min}(\hat{\Sigma}) = d^* \kappa(\hat{\Sigma})$. Thus, $\|g_t\| \leqslant L d^* \kappa(\hat{\Sigma})$ holds almost surely.
\end{proof}

\subsection{Variance Bound via Spherical Concentration}
\label{app:second-anisotropic-sphere}

\begin{lemma}[Variance Bound via Spherical Concentration]
\label{lem:spherical_variance}
Let $\mathcal{S} \subseteq \mathbb{R}^d$ be a $d^*$-dimensional subspace with $d^* \ge 2$, and let $\mathbb{S}^{d^*}$ denote the unit sphere restricted to $\mathcal{S}$ (having intrinsic dimension $d^*-1$).

Let $h: \mathbb{S}^{d*} \to \mathbb{R}$ be an $L$-Lipschitz continuous function with respect to the geodesic distance on $\mathbb{S}^{d^*}$. Then, the variance of $h(v)$ under the uniform distribution $v \sim \mathcal{U}(\mathbb{S}^{d^*})$ satisfies:
\[
\text{Var}_{v \sim \mathcal{U}(\mathbb{S}^{d^*})}[h(v)] \le \frac{8 L^2 \mu_t^2}{d^*}.
\]
\end{lemma}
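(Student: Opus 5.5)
The plan is to obtain the bound from the Poincaré inequality on the sphere, and then to track how the factor $\mu_t$ enters. The statement only makes sense dimensionally if $h$ is the two-point difference used in the estimator. So I will read it as follows: $h(v) = F(x_t+\mu_t v;\xi) - F(x_t-\mu_t v;\xi)$, or a single term $F(x_t+\mu_t v;\xi)$, restricted to $v\in\mathcal{S}$, $\|v\|=1$, with $L$ the Lipschitz constant of $F(\cdot;\xi)$ from Assumption~\ref{asm:lipschitz}. Under this reading $h$ is Lipschitz in $v$ with constant $L_h \leqslant 2L\mu_t$, since $|h(v)-h(w)| \leqslant 2L\mu_t\|v-w\|$. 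Because the chordal distance is at most the geodesic distance, $h$ is also $2L\mu_t$-Lipschitz for the geodesic metric. I will state this identification explicitly at the start, since with a bare $L$-Lipschitz $h$ the factor $\mu_t^2$ has no meaning.

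\textbf{Step 1 (reduction to a standard sphere).} Pick an orthonormal basis of $\mathcal{S}$, as in the eigendecomposition $Q$ used in the proof of Lemma~\ref{lem:lemma3}. This identifies $\mathbb{S}^{d^*}$ isometrically with the standard unit sphere $S^{n}\subset\mathbb{R}^{d^*}$ with $n=d^*-1\geqslant 1$, and carries the uniform measure to the uniform measure.

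\textbf{Step 2 (Poincaré inequality).} On $S^n$ with the round metric, the first nonzero eigenvalue of the Laplace--Beltrami operator is $n$. The Poincaré inequality therefore gives, for any Lipschitz $h$ (differentiable a.e.\ by Rademacher, with $\|\nabla_{S^n} h\|\leqslant L_h$ a.e.),
\[
\operatorname{Var}[h(v)] \leqslant \frac{1}{n}\,\mathbb{E}\bigl[\|\nabla_{S^n} h(v)\|^2\bigr] \leqslant \frac{L_h^2}{d^*-1}.
\]
If one prefers to avoid spectral geometry, a fallback is the concentration inequality $\mathbb{P}(|h-\mathbb{E}h|>s)\leqslant 2\exp(-(d^*-1)s^2/(2L_h^2))$ (Lévy's lemma, as in Vershynin). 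Integrating the tail, $\operatorname{Var}\leqslant \int_0^\infty 2s\cdot 2e^{-(d^*-1)s^2/(2L_h^2)}\,ds = 4L_h^2/(d^*-1)$. This is off by a constant but has the same form.

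\textbf{Step 3 (constants).} Substitute $L_h = 2L\mu_t$ and use $d^*-1\geqslant d^*/2$ for $d^*\geqslant 2$. This yields $\operatorname{Var}[h(v)] \leqslant 4L^2\mu_t^2/(d^*-1) \leqslant 8L^2\mu_t^2/d^*$, which is the claimed bound.

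The main obstacle is the bookkeeping in Step 2. With the sharp Poincaré constant, the factor $8$ comes out exactly. With the tail-integration route, the constant would be $32$ instead of $8$ unless one uses the single-term $h$ (Lipschitz constant $L\mu_t$) or a sharper concentration constant. I will therefore commit to the Poincaré route and cite the spectral gap $n$ of $S^n$ as a standard fact.

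A secondary point is that $v_t$ in the algorithm is not uniform on $\mathbb{S}^{d^*}$ unless $\hat{\Sigma}$ is isotropic on $\mathcal{S}$. The lemma as stated concerns only the uniform measure, so I will not address the anisotropic law here.
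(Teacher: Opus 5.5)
Your proof is correct, but it rests on a different key inequality than the paper's.

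Your reinterpretation is actually necessary. For a bare $L$-Lipschitz $h$ the claim fails: take $h(v)=L\langle e,v\rangle$ with $e\in\mathcal{S}$ a unit vector. Its variance is $L^2/d^*$, which exceeds $8L^2\mu_t^2/d^*$ whenever $\mu_t<1/\sqrt{8}$. The paper's proof silently makes the same move, taking $h(v)=F(x_t+\mu_t v;\xi)$ (a single term) with $L_h=L\mu_t$.

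From there the paper takes the route of your fallback, centered at the median instead of the mean:
\begin{itemize}
\item it bounds the variance by the second moment about the median;
\item it applies the L\'evy-type tail bound $\mathbb{P}(|h-\mathbb{M}[h]|\ge\epsilon)\le 2\exp(-(d^*-1)\epsilon^2/(2L_h^2))$;
\item it integrates via the layer-cake formula to get $4L_h^2/(d^*-1)$.
\end{itemize}

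You instead use the spectral gap $\lambda_1(S^{n})=n=d^*-1$ of the round sphere. This gives $L_h^2/(d^*-1)$, a factor $4$ better. That slack is what lets you afford the two-point constant $L_h=2L\mu_t$ and still reach $8L^2\mu_t^2/d^*$. The paper's route reaches $8$ only under the single-term reading, and your remark that it gives $32$ otherwise is right.

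What each approach buys:
\begin{itemize}
\item The paper's argument needs only a concentration inequality and an elementary integral. However, its explicit constant depends on a sharp form of L\'evy's inequality; the version it cites from Vershynin carries unspecified absolute constants.
\item Your argument needs the sharp eigenvalue $\lambda_1(S^n)=n$, plus Rademacher's theorem to justify $\|\nabla h\|\le L_h$ almost everywhere. In return the constant is genuinely sharp, and the variance is controlled about the mean directly.
\end{itemize}

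Your closing observation is also correct: $v_t$ is not uniform on the subspace sphere unless $\hat{\Sigma}$ is isotropic on $\mathcal{S}$. This does not affect the lemma itself, but it does undermine how the lemma is applied in the proof of Lemma~\ref{lem:second-anisotropic}.
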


\begin{proof}
Let $\mathbb{M}[h]$ denote the median of the function $h$ on the sphere $\mathbb{S}^{d^*}$. By definition, the variance minimizes the expected squared deviation from a constant, which implies that the variance is upper-bounded by the expected squared deviation from the median:
\begin{equation*}
\label{eq:var_median_bound}
\text{Var}_{v \sim \mathcal{U}(\mathbb{S}^{d^*})}[h(v)] \le \mathbb{E}_{v \sim \mathcal{U}(\mathbb{S}^{d^*})} \left[ (h(v) - \mathbb{M}[h])^2 \right].
\end{equation*}
For any non-negative random variable $X = |h(v) - \mathbb{M}[h]|$, its second moment can be expressed via the layer-cake representation (integration of the tail probability):
\[
\mathbb{E}[X^2] = \int_{0}^{\infty} \mathbb{P}(X^2 \ge t) \, dt.
\]
Applying the change of variable $t = \epsilon^2$ (with $dt = 2\epsilon \, d\epsilon$), we rewrite the expectation as:
\begin{equation}
\label{eq:tail_integral}
\mathbb{E}\left[ (h(v) - \mathbb{M}[h])^2 \right] = \int_{0}^{\infty} \mathbb{P}\bigl(|h(v) - \mathbb{M}[h]| \ge \epsilon\bigr) \cdot 2\epsilon \, d\epsilon.
\end{equation}

To evaluate the Lipschitz constant $L_h$ of the auxiliary function $h(v) = F(x_t + \mu_t v; \xi)$ with respect to the geodesic distance $d_g(\cdot, \cdot)$ on $\mathbb{S}^{d^*}$, we leverage the $L$-Lipschitz continuity of $F(\cdot; \xi)$ under the standard Euclidean norm (Assumption~\ref{asm:lipschitz}). For any two directional vectors $v_1, v_2 \in \mathbb{S}^{d^*}$, we have:
\[
|h(v_1) - h(v_2)| = |F(x_t + \mu_t v_1; \xi) - F(x_t + \mu_t v_2; \xi)| \le L \|(x_t + \mu_t v_1) - (x_t + \mu_t v_2)\|_2.
\]
Canceling the position vector $x_t$ and pulling out the positive scaling parameter $\mu_t > 0$ from the Euclidean norm yields:
\[
|h(v_1) - h(v_2)| \le L \|\mu_t (v_1 - v_2)\|_2 = L \mu_t \|v_1 - v_2\|_2.
\]
Since the straight-line Euclidean distance (chord length) between any two points on a unit sphere is strictly upper-bounded by their shortest path along the sphere's surface (geodesic arc length $d_g(v_1, v_2)$), the geometric inequality $\|v_1 - v_2\|_2 \le d_g(v_1, v_2)$ holds. Substituting this back into the formulation establishes that:
\[
|h(v_1) - h(v_2)| \le (L \mu_t) d_g(v_1, v_2).
\]
Hence, $h(v)$ is formally $L_h$-Lipschitz continuous on $\mathbb{S}^{d^*}$ with an explicit constant $L_h = L \mu_t$. 

By L\'evy's isoperimetric concentration inequality on the sphere for an $L_h$-Lipschitz function \cite[Section~5.1]{vershynin2026high}, the tail probability decays exponentially with the dimension:
\begin{equation}
\label{eq:levy_bound}
\mathbb{P}\bigl(|h(v) - \mathbb{M}[h]| \ge \epsilon\bigr) \le 2 \exp\left( - \frac{(d^*-1) \epsilon^2}{2 L_h^2} 
\right)
\le 2 \exp\left( - \frac{(d^*-1) \epsilon^2}{2 (L\mu_t)^2} 
\right).
\end{equation}
Substituting the concentration bound \eqref{eq:levy_bound} into the integral \eqref{eq:tail_integral} yields:
\[
\mathbb{E}\left[ (h(v) - \mathbb{M}[h])^2 \right] \le 4 \int_{0}^{\infty} \epsilon \cdot \exp\left( - \frac{(d^*-1) \epsilon^2}{2 (L\mu_t)^2} \right) \, d\epsilon.
\]
To evaluate this integral, we introduce the substitution $u = \frac{(d^*-1) \epsilon^2}{2 (L\mu_t)^2}$, which gives $du = \frac{d^*-1}{(L\mu_t)^2} \epsilon \, d\epsilon$. The limits of integration remain from $0$ to $\infty$. This transforms the bound into:
\[
\mathbb{E}\left[ (h(v) - \mathbb{M}[h])^2 \right] \le 4 \cdot \frac{L^2}{d^*-1} \int_{0}^{\infty} \exp(-u) \, du = \frac{4 (L\mu_t)^2}{d^*-1},
\]
where we used the standard Gaussian integral fact that $\int_{0}^{\infty} \exp(-u) \, du = 1$. For any effective dimension $d^* \ge 2$, it holds that $\frac{1}{d^*-1} \le \frac{2}{d^*}$. Combining this with \eqref{eq:var_median_bound}, we obtain the final analytical limit:
\[
\text{Var}_{v \sim \mathcal{U}(\mathbb{S}^{d^*})}[h(v)] \le \frac{8 L^2\mu_t^2}{d^*},
\]
which completes the proof.
\end{proof}

\subsection{Proof of Lemma \ref{lem:second-anisotropic}}
\label{app:second-anisotropic}

\begin{proof}
Consider the difference of function values at the two sampling points:
\begin{equation*}
    \Delta F_t := F(x_t + \mu_t v_t; \xi_t^+) - F(x_t - \mu_t v_t; \xi_t^-).
\end{equation*}

Under the Lipschitz continuity assumption, $F(\cdot; \xi)$ yields the deterministic bound $|\Delta F_t| \le 2L\mu_t \|v_t\|_2 = 2L\mu_t$ since $\|v_t\|_2 = 1$ by construction.

To establish the tighter bound on the second moment, we leverage the concentration of Lipschitz functions on the unit sphere restricted to the active low-rank subspace. By Lemma \ref{lem:lemma3}, $v_t$ strictly resides on the sphere within the active subspace $\mathcal{S}$. Fix the noise realization $\xi$ and define the auxiliary function $h(v) := F(x_t + \mu_t v; \xi)$, which is $(L\mu_t)$-Lipschitz with respect to the geodesic distance on the subspace sphere surface.

By Lemma 12 (Spherical Variance Bound), the variance of $h(v)$ satisfies:
\begin{equation*}
\text{Var}[h(v_t)] \le \frac{8 L^2 \mu_t^2}{d^*}.
\end{equation*}
Due to the central symmetry $v_t \stackrel{d}{=} -v_t$, the conditional expectation of the squared difference satisfies:
\begin{equation*}
    \mathbb{E}[(\Delta F_t)^2 \mid \mathcal{F}_{t-1}] \le 4 \text{Var}[h(v_t)] \le \frac{32 L^2 \mu_t^2}{d^*}.
\end{equation*}

Now, we evaluate the expected squared norm of the preconditioned gradient estimator $g_t$ defined in (4). Using the algebraic identity $\|x\|_2^2 = \operatorname{tr}(xx^\top)$ and pulling out scalar terms, we can rewrite the norm as:
\begin{equation*}
    \|g_t\|_2^2 = \left\| \frac{1}{2\mu_t} \Delta F_t \tilde{\Sigma} v_t \right\|_2^2 = \frac{(\Delta F_t)^2}{4\mu_t^2} \operatorname{tr}\left( \tilde{\Sigma} v_t v_t^\top \tilde{\Sigma}^\top \right).
\end{equation*}
Applying the cyclic property of the trace ($\operatorname{tr}(ABC) = \operatorname{tr}(CAB)$) and noting that $\tilde{\Sigma}$ is symmetric ($\tilde{\Sigma}^\top = \tilde{\Sigma}$), this reduces to:
\begin{equation*}
    \|g_t\|_2^2 = \frac{(\Delta F_t)^2}{4\mu_t^2} \operatorname{tr}\left( \tilde{\Sigma}^2 v_t v_t^\top \right).
\end{equation*}
Taking the conditional expectation and substituting the high-dimensional concentration approximation $\mathbb{E}[v_t v_t^\top] \approx \frac{\hat{\Sigma}}{\operatorname{tr}(\hat{\Sigma})}$ directly from Lemma \ref{lem:lemma3}, we decouple the product:
\begin{align*}
    \mathbb{E}[\|g_t\|_2^2 \mid \mathcal{F}_{t-1}] &\leqslant \frac{32 L^2 \mu_t^2}{4 \mu_t^2 d^*} \operatorname{tr}\left( \tilde{\Sigma}^2 \frac{\hat{\Sigma}}{\operatorname{tr}(\hat{\Sigma})} \right) \\
    &= \frac{8 L^2}{d^* \operatorname{tr}(\hat{\Sigma})} \operatorname{tr}\left( \tilde{\Sigma}^2 \hat{\Sigma} \right).
\end{align*}
Recalling the explicit structure of our preconditioning matrix $\tilde{\Sigma} = \operatorname{tr}(\hat{\Sigma})\pinv{\hat{\Sigma}}$, we expand the squared block:
\begin{align*}
    \mathbb{E}[\|g_t\|_2^2 \mid \mathcal{F}_{t-1}] &\leqslant \frac{8 L^2}{d^* \operatorname{tr}(\hat{\Sigma})} \operatorname{tr}\left( \left(\operatorname{tr}(\hat{\Sigma})\pinv{\hat{\Sigma}}\right) \left(\operatorname{tr}(\hat{\Sigma})\pinv{\hat{\Sigma}}\right) \hat{\Sigma} \right) \\
    &= \frac{8 L^2 \operatorname{tr}(\hat{\Sigma})}{d^*} \operatorname{tr}\left( \pinv{\hat{\Sigma}} \pinv{\hat{\Sigma}} \hat{\Sigma} \right).
\end{align*}
By the fundamental defining properties of the Moore-Penrose pseudoinverse, the product satisfies $\pinv{\hat{\Sigma}}\hat{\Sigma} = P_{\mathcal{S}}$, where $P_{\mathcal{S}}$ is the active subspace projection matrix. Since $P_{\mathcal{S}}$ acts as an identity operator on the range of $\pinv{\hat{\Sigma}}$, the term simplifies to:
\begin{equation*}
    \operatorname{tr}\left( \pinv{\hat{\Sigma}} P_{\mathcal{S}} \right) = \operatorname{tr}\left( \pinv{\hat{\Sigma}} \right).
\end{equation*}
Substituting this back into the trace bound yields:
\begin{equation*}
    \mathbb{E}[\|g_t\|_2^2 \mid \mathcal{F}_{t-1}] \leqslant \frac{8 L^2 \operatorname{tr}(\hat{\Sigma})}{d^*} \operatorname{tr}\left( \pinv{\hat{\Sigma}} \right).
\end{equation*}

Finally, to bound the remaining trace term, we observe that the operator product inside the trace satisfies:
\begin{equation*}
    \operatorname{tr}\left( \hat{\Sigma}^+ \hat{\Sigma}^+ \hat{\Sigma} \right) = \operatorname{tr}\left( \hat{\Sigma}^+ P_{\mathcal{S}} \right) = \operatorname{tr}\left( \hat{\Sigma}^+ \right) = \sum_{i=1}^{d^*} \frac{1}{\lambda_i} \leqslant \frac{d^*}{\lambda_{\min}(\hat{\Sigma})}.
\end{equation*}
Substituting this directly back into the expected second-moment bound yields:
\begin{equation*}
    \mathbb{E}[\|g_t\|_2^2 \mid \mathcal{F}_{t-1}] \leqslant \frac{8 L^2 \operatorname{tr}(\hat{\Sigma})}{d^*} \cdot \frac{d^*}{\lambda_{\min}(\hat{\Sigma})} = 8 L^2 \frac{\operatorname{tr}(\hat{\Sigma})}{\lambda_{\min}(\hat{\Sigma})}.
\end{equation*}
By utilizing the exact definition of the condition number within the active subspace, $\kappa(\hat{\Sigma}) \triangleq \lambda_{\max}(\hat{\Sigma})/\lambda_{\min}(\hat{\Sigma})$, and recalling that $d^* = \operatorname{tr}(\hat{\Sigma})/\lambda_{\max}(\hat{\Sigma})$, we rewrite the ratio as:
\begin{equation*}
    \frac{\operatorname{tr}(\hat{\Sigma})}{\lambda_{\min}(\hat{\Sigma})} = \frac{\operatorname{tr}(\hat{\Sigma})}{\lambda_{\max}(\hat{\Sigma})} \cdot \frac{\lambda_{\max}(\hat{\Sigma})}{\lambda_{\min}(\hat{\Sigma})} = d^* \kappa(\hat{\Sigma}).
\end{equation*}
Thus, the expected second moment of the preconditioned gradient estimator satisfies:
\begin{equation*}
    \mathbb{E}[\|g_t\|_2^2 \mid \mathcal{F}_{t-1}] \leqslant 8 d^* \kappa(\hat{\Sigma}) L^2,
\end{equation*}
which completes the proof under full anisotropy with $c = 8 \kappa(\hat{\Sigma})$.
\end{proof}

\subsection{Proof of Lemma \ref{lem:weighted-regret}}
\label{app:weighted-regret}

\begin{proof}
The proof follows from a standard analysis of projected SGD, as detailed in Lemma 3.4 of Ivgi et al.~\cite{ivgi2023dog}. The update rule $x_{k+1} = \Pi_{\mathcal{X}}(x_k - \eta_k g_k)$ implies the following inequality for the squared distance to the optimal point $x_*$:
\begin{equation*}
\|x_{k+1} - x_*\|^2 \le \|x_k - x_* - \eta_k g_k\|^2 = \|x_k - x_*\|^2 - 2\eta_k \langle g_k, x_k - x_* \rangle + \eta_k^2 \|g_k\|^2.
\end{equation*}
Rearranging this inequality gives:
\begin{equation*}
2\eta_k \langle g_k, x_k - x_* \rangle \le \|x_k - x_*\|^2 - \|x_{k+1} - x_*\|^2 + \eta_k^2 \|g_k\|^2.
\end{equation*}
Summing this inequality over $k = 0$ to $t-1$ results in a telescoping sum for the first two terms on the right-hand side. The bound is then obtained by substituting the specific definitions of the weights $\bar{r}_k$, the step sizes $\eta_k$, and the accumulated gradient norm $G_{t-1}$, and by applying appropriate inequalities to control the terms, as shown in the referenced work.
\end{proof}

\subsection{Proof of Lemma \ref{lem:martingale-noise}}
\label{app:martingale-noise}

\begin{proof}
Define the filtration $\mathcal{F}_k = \sigma(v_i, \xi_i, 0 \leqslant i \leqslant k)$. The sequence $X_k = \bar{r}_k \langle \Delta_k, x_k - x_* \rangle$ forms a martingale difference sequence with respect to $\mathcal{F}_k$, since:
\begin{equation*}
    \mathbb{E}[X_k \mid \mathcal{F}_{k-1}] = \bar{r}_k \langle \mathbb{E}[\Delta_k \mid \mathcal{F}_{k-1}], x_k - x_* \rangle = 0.
\end{equation*}
Next, we bound the conditional variance. Using the bound on the expected second moment of the gradient estimator from Lemma \ref{lem:second-anisotropic} ($\mathbb{E}[\|g_k\|^2 \mid \mathcal{F}_{k-1}] \leqslant c d^* L^2$ where $c=8$), we have:
\begin{align*}
    \mathbb{E}[X_k^2 \mid \mathcal{F}_{k-1}] &= \bar{r}_k^2 \mathbb{E}[\langle \Delta_k, x_k - x_* \rangle^2 \mid \mathcal{F}_{k-1}] \\
    &\leqslant \bar{r}_k^2 \mathbb{E}[\|\Delta_k\|^2 \mid \mathcal{F}_{k-1}] \|x_k - x_*\|^2 \\
    &\leqslant 4 \bar{r}_k^2 (c d^* L^2) D^2,
\end{align*}
where we used the boundedness of the domain $\|x_k - x_*\| \leqslant D$ and the fact that $\Delta_k$ scales with the gradient norm. Let $V_t = \sum_{k=0}^{t-1} \mathbb{E}[X_k^2 \mid \mathcal{F}_{k-1}]$ be the cumulative variance.

Applying the high-probability concentration inequality for martingales with sub-exponential differences in Lemma 2 of Ivgi et al.~\cite{ivgi2023dog}, we obtain:
\begin{equation*}
    \Pr\left( \exists t \geqslant 1 : \left| \sum_{k=0}^{t-1} X_k \right| \geqslant C \sqrt{\theta_{t,\delta} V_t + B^2 \theta_{t,\delta}^2} \right) \leqslant \delta,
\end{equation*}
where $B$ is a uniform bound on the increments, which by Lemma \ref{lem:norm-anisotropic} satisfies $B \leqslant 2 \bar{r}_{t-1} D L d^* \kappa(\hat{\Sigma})$, and $C$ is a universal constant. Substituting the bounds for $V_t$ through $G_{t,\delta}$ and the updated effective dimension scaling yields the stated result.
\end{proof}

\subsection{Proof of Lemma \ref{lem:smoothing-noise}}
\label{app:smoothing-noise}

\begin{proof}
Recall the choice of the smoothing parameter $\mu_k = \bar{r}_k \sqrt{\frac{d^*}{k+1}}$. Since the effective dimension is bounded by $d^*$ (i.e., $d_k \leqslant d^*$), we have:
\begin{equation*}
    \mu_k \leqslant \bar{r}_k \sqrt{\frac{d^*}{k+1}}.
\end{equation*}
Substituting this into the sum:
\begin{align*}
    \sum_{k=0}^{t-1} 2 L \bar{r}_k \mu_k &\leqslant \sum_{k=0}^{t-1} 2 L \bar{r}_k \left( \bar{r}_k \sqrt{\frac{d^*}{k+1}} \right) \\
    &\leqslant 2 L \bar{r}_{t-1}^2 \sqrt{d^*} \sum_{k=0}^{t-1} \frac{1}{\sqrt{k+1}},
\end{align*}
where we used the monotonicity of the weights $\bar{r}_k \leqslant \bar{r}_{t-1}$.
Using the standard integral bound for the harmonic sum $\sum_{k=1}^{t} \frac{1}{\sqrt{k}} \leqslant \int_0^t \frac{dx}{\sqrt{x}} = 2\sqrt{t}$, we obtain:
\begin{equation*}
    \sum_{k=0}^{t-1} \frac{1}{\sqrt{k+1}} \leqslant 2 \sqrt{t}.
\end{equation*}
Combining these terms yields the final bound:
\begin{equation*}
    \sum_{k=0}^{t-1} 2 L \bar{r}_k \mu_k \leqslant 4 L \bar{r}_{t-1}^2 \sqrt{d^* t}.
\end{equation*}
\end{proof}

\subsection{Proof of Lemma \ref{lem:weights-lower}}
\label{app:weights-lower}

\begin{proof}
The result is a direct consequence of Lemma C.7 in \cite{ivgi2023dog}. Specifically, for any non-decreasing positive sequence $a_t$, the following inequality holds:
\begin{equation*}
    \max_{t \in [T]} \frac{\sum_{k=0}^{t-1} a_k}{a_t} \geqslant \frac{T}{e \log_+\left(\frac{a_T}{a_0}\right)} - 1.
\end{equation*}
Substituting $a_t = \bar{r}_t$ and $a_0 = r_\epsilon$, and observing that the term $-1$ is negligible for large $T$ (or can be absorbed into the constant factors of the final convergence rate), we arrive at the stated bound.
\end{proof}  

\section{Convergence of POEM-CMA}

\subsection{Proof of Theorem \ref{thm:convergence-poem-cma}}
\label{app:convergence-poem-cma}

\begin{proof}
By the convexity of $f$ and Jensen's inequality, we have:
\begin{equation*}
    f(\bar{x}_t) - f(x_*) \le \frac{1}{\sum_{k=0}^{t-1} \bar{r}_k} \sum_{k=0}^{t-1} \bar{r}_k (f(x_k) - f(x_*)).
\end{equation*}

Using the smoothing error bound (analogous to Lemma C.2 in \cite{ivgi2023dog})), we decompose the difference at each iterate:
\[
f(x_k) - f(x^*) \le \langle P_{\mathcal{S}}\nabla f_{\mu_k}(x_k), x_k - x^* \rangle + 2L\mu_k.
\]
Since the vector $x_k - x^*$ resides within the active subspace $\mathcal{S}$ by definition, the identity $\langle P_{\mathcal{S}}\nabla f_{\mu_k}(x_k), x_k - x^* \rangle = \langle \nabla f_{\mu_k}(x_k), x_k - x^* \rangle$ holds automatically. This preserves the validity of all subsequent inequalities while ensuring strict compatibility with the subspace gradient estimator.

Substituting and expanding the inner product, we obtain:
\begin{equation*}
    f(\bar{x}_t) - f(x_*) \le \frac{1}{\sum_{k=0}^{t-1} \bar{r}_k} \left( \underbrace{\sum_{k=0}^{t-1} \bar{r}_k \langle g_k, x_k - x_* \rangle}_{\text{Descent Term}} + \underbrace{\sum_{k=0}^{t-1} \bar{r}_k \langle \Delta_k, x_k - x_* \rangle}_{\text{Martingale Noise}} + \underbrace{\sum_{k=0}^{t-1} 2L \bar{r}_k \mu_k}_{\text{Smoothing Error}} \right).
\end{equation*}
We bound each term using the corresponding lemmas:
\begin{itemize}
    \item The \textit{Weighted Regret} is bounded by Lemma \ref{lem:weighted-regret}.
    \item The \textit{Martingale Noise} term is bounded by Lemma \ref{lem:martingale-noise}.
    \item The \textit{Smoothing Noise} term is bounded by Lemma \ref{lem:smoothing-noise}.
\end{itemize}
Combining these bounds, collecting the terms, and applying the weight growth bound from Lemma \ref{lem:weights-lower}, we arrive at the stated convergence rate.
\end{proof}

\end{document}